\documentclass[10pt]{amsart}

\usepackage{amsmath,amsthm,amsfonts,amscd,amssymb}
\usepackage{hyperref,wasysym}
\usepackage{verbatim}
\usepackage{amssymb}
\usepackage{graphicx}
\usepackage{color}
\usepackage{soul}

\newtheorem{thm}{Theorem}[section]
\newtheorem{cor}[thm]{Corollary}
\newtheorem{lem}[thm]{Lemma}

\theoremstyle{definition}

\theoremstyle{remark}
\newtheorem{rem}{Remark}[section]

\begin{document}

\title{On indecomposable elements in lattices}

\author{Lenny Fukshansky}
\author{Filiana Kostopoulou}

\address{Department of Mathematics, 850 Columbia Avenue, Claremont McKenna College, Claremont, CA 91711}
\email{lenny@cmc.edu}
\address{Department of Mathematics, Pomona College, 610 N. College Ave, Claremont, CA 91711, USA}
\email{tkxc2022@mymail.pomona.edu}

\subjclass[2020]{Primary: 11H06; Secondary: 11R80, 52B20}
\keywords{Euclidean lattice, indecomposable element, positive cone, algebraic numbers, house}

\begin{abstract}
We study the distribution of indecomposable elements in Euclidean lattices. A positive element in a lattice is called indecomposable if it cannot be represented as a sum of two other positive nonzero elements. The set of all indecomposables in a lattice forms the Hilbert basis for the positive lattice semigroup. We classify lattices that contain only finitely many indecomposables versus those that contain infinitely many. In the two-dimensional case, we prove that every positive element in a lattice can be represented as a positive integer linear combination of at most two indecomposables, which is a certain variation of the discrete Carath\'eodory's property. In the case of lattices coming from fractional ideals in real quadratic number fields, we obtain an explicit counting estimate for the number of indecomposables with bounded norm, showing logarithmic growth.
\end{abstract}

\maketitle

\def\A{{\mathcal A}}
\def\B{{\mathcal B}}
\def\C{{\mathcal C}}
\def\D{{\mathcal D}}
\def\F{{\mathcal F}}
\def\x{{\mathcal H}}
\def\I{{\mathcal I}}
\def\J{{\mathcal J}}
\def\K{{\mathcal K}}
\def\L{{\mathcal L}}
\def\M{{\mathcal M}}
\def\N{{\mathcal N}}
\def\O{{\mathcal O}}
\def\R{{\mathcal R}}
\def\s{{\mathcal S}}
\def\V{{\mathcal V}}
\def\W{{\mathcal W}}
\def\X{{\mathcal X}}
\def\Y{{\mathcal Y}}
\def\H{{\mathcal H}}
\def\Z{{\mathcal Z}}
\def\OO{{\mathcal O}}
\def\BB{{\mathbb B}}
\def\cee{{\mathbb C}}
\def\EE{{\mathbb E}}
\def\Nn{{\mathbb N}}
\def\pee{{\mathbb P}}
\def\que{{\mathbb Q}}
\def\real{{\mathbb R}}
\def\zed{{\mathbb Z}}
\def\hyp{{\mathbb H}}
\def\aa{{\mathfrak a}}
\def\HH{{\mathfrak H}}
\def\qbar{{\overline{\mathbb Q}}}
\def\eps{{\varepsilon}}
\def\ahat{{\hat \alpha}}
\def\bhat{{\hat \beta}}
\def\gt{{\tilde \gamma}}
\def\h{{\tfrac12}}
\def\be{{\boldsymbol e}}
\def\bei{{\boldsymbol e_i}}
\def\bff{{\boldsymbol f}}
\def\ba{{\boldsymbol a}}
\def\bb{{\boldsymbol b}}
\def\bc{{\boldsymbol c}}
\def\bd{{\boldsymbol d}}
\def\bm{{\boldsymbol m}}
\def\bn{{\boldsymbol n}}
\def\bk{{\boldsymbol k}}
\def\bi{{\boldsymbol i}}
\def\bl{{\boldsymbol l}}
\def\bq{{\boldsymbol q}}
\def\bu{{\boldsymbol u}}
\def\bt{{\boldsymbol t}}
\def\bs{{\boldsymbol s}}
\def\bv{{\boldsymbol v}}
\def\bw{{\boldsymbol w}}
\def\bx{{\boldsymbol x}}
\def\bX{{\boldsymbol X}}
\def\bz{{\boldsymbol z}}
\def\bwy{{\boldsymbol y}}
\def\bY{{\boldsymbol Y}}
\def\bL{{\boldsymbol L}}
\def\baa{{\boldsymbol\alpha}}
\def\bbb{{\boldsymbol\beta}}
\def\bgg{{\boldsymbol\gamma}}
\def\bet{{\boldsymbol\eta}}
\def\bxi{{\boldsymbol\xi}}
\def\bo{{\boldsymbol 0}}
\def\bol{{\boldkey 1}_L}
\def\ep{\varepsilon}
\def\p{\boldsymbol\varphi}
\def\q{\boldsymbol\psi}
\def\rank{\operatorname{rank}}
\def\aut{\operatorname{Aut}}
\def\lcm{\operatorname{lcm}}
\def\sgn{\operatorname{sgn}}
\def\spn{\operatorname{span}}
\def\md{\operatorname{mod}}
\def\Norm{\operatorname{Norm}}
\def\dim{\operatorname{dim}}
\def\det{\operatorname{det}}
\def\Vol{\operatorname{Vol}}
\def\rk{\operatorname{rk}}
\def\Gal{\operatorname{Gal}}
\def\WR{\operatorname{WR}}
\def\WO{\operatorname{WO}}
\def\GL{\operatorname{GL}}
\def\pr{\operatorname{pr}}
\def\Tr{\operatorname{Tr}}
\def\dd{\partial}
\def\itt{\operatorname{int}}
\def\Ar{\operatorname{Area}}
\def\Aut{\operatorname{Aut}}

\section{Introduction and summary of results}
\label{intro}

Let $K$ be a totally real number field of degree $n \geq 2$ and let $\sigma_1,\dots,\sigma_n : K \hookrightarrow \real$ be its embeddings. The {\it norm} of an element $\alpha \in K$ is defined as
$$\Nn_K(\alpha) = \prod_{j=1}^n |\sigma_j(\alpha)|,$$
and the {\it house} of $\alpha$ is
$$H_K(\alpha) = \max \{ |\sigma_1(\alpha)|,\dots,|\sigma_n(\alpha)| \}.$$
Write $\O_K$ for the ring of integers of $K$ and define the {\it semigroup of totally positive integers} in $K$ to be
$$\O_K^+ = \{ \alpha \in \O_K : \sigma_j(\alpha) \geq 0\ \forall\ 1 \leq j \leq n \}.$$
A nonzero element $\alpha \in \O_K^+$ is called {\it decomposable} if there exist nonzero elements $\beta, \gamma \in \O_K^+$ such that $\alpha = \beta + \gamma$; otherwise, $\alpha$ is called {\it indecomposable}. Indecomposables in totally real number fields have been extensively studied, in particular in connection to the theory of universal quadratic forms (see~\cite{kala} for a detailed survey). As discussed in~\cite{kala}, the number of indecomposables in $\O_K^+$ is always finite, up to multiplication by totally positive units; in fact, indecomposables can be described as elements of $\O_K^+$ of appropriately bounded norm (\cite{kala_yatsyna}, Theorem~5). This being said, the group of units is infinite, and while multiplication by a unit does not change the norm of an element, it certainly changes its house. On the other hand, the ring of integers $\O_K$ can be viewed as a Euclidean lattice in $\real^n$ under the standard {\it Minkowski embedding}
$$\Sigma_K = (\sigma_1,\dots,\sigma_n) : K \hookrightarrow \real^n,$$
where $\O_K^+$ becomes a positive lattice semigroup, which allows us to view indecomposables in the context of the geometry of numbers. In particular, the house $H_K(\alpha)$ of an element $\alpha \in K$ becomes the sup-norm $|\Sigma_K(\alpha)|$ of its image under the Minkowski embedding. The goal of the present note is to define indecomposables more generally in Euclidean lattices, study their properties using geometric tools, and then apply our observations in the context of real quadratic number fields.

Throughout this paper, we write $|\ |$ for the sup-norm and $\|\ \|$ for the Euclidean norm on vectors in~$\real^n$. Let $L \subset \real^n$ be a lattice of rank $n \geq 2$ and define the positive lattice semigroup
$$L^+ = L \cap \real^n_{\geq 0} = \left\{ \bx \in L : x_i \geq 0\ \forall\ 1 \leq i \leq n \right\}.$$
A nonzero vector $\bx \in L^+$ is called {\it decomposable} if there exist nonzero vectors $\bwy,\bz \in L^+$ such that $\bx = \bwy + \bz$; otherwise, $\bx$ is called {\it indecomposable}. 

The first question we want to address is which lattices have finitely many and which infinitely many indecomposables. We start with some notation. A basis for $L$ is called a {\it positive basis} if it is contained in $L^+$. A lattice is called {\it rectangular} if it has an orthogonal basis. A lattice is called {\it virtually rectangular} if it has a rectangular sublattice of finite index; virtually rectangular lattices have been investigated in~\cite{kuhnlein} and~\cite{lps}. Let us also say that a lattice $L$ is {\it positive rectangular}, abbreviated PR, if it contains a positive orthogonal basis, and $L$ is called {\it positive virtually rectangular}, abbreviated PVR, if it contains a PR sublattice $M$ of finite index. We can now state our first result.

\begin{thm} \label{inf_many} If $L$ is a PVR lattice of rank $n$, then $L^+$ contains at most $d+n-1$ indecomposable elements, where
$$d = \min \{ [L:M] : M \subseteq L \text{ is PR} \}.$$
Otherwise, $L^+$ contains infinitely many indecomposable elements.
\end{thm}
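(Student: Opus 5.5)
The plan is to reduce both halves of Theorem~\ref{inf_many} to one observation: $L$ is PVR exactly when every coordinate axis meets $L$ nontrivially. To see this, take a positive orthogonal basis $\bb_1,\dots,\bb_n$ of a PR sublattice. Since $\bb_i \cdot \bb_j = 0$ and all coordinates are nonnegative, the supports of the $\bb_i$ are pairwise disjoint. There are $n$ nonzero vectors with disjoint supports in $\real^n$, so each $\bb_i$ is a positive multiple of a distinct standard basis vector $\be_{\pi(i)}$. Conversely, if $L \cap \real_{>0}\be_i \neq \{\bo\}$ for all $i$, then let $s_i\be_i$ be the generator of $L\cap \real\be_i$ with $s_i>0$. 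The vectors $s_1\be_1,\dots,s_n\be_n$ span a full-rank sublattice $M_0$, which is therefore of finite index and PR. Every PR sublattice $M$ has the form $\bigoplus t_i\zed\be_i$ with $s_i \mid t_i$, so $M\subseteq M_0$ and $[L:M]\ge [L:M_0]$. Hence $d=[L:M_0]$.

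\emph{Finite case.} Assume $L$ is PVR and take $M_0$ as above. Let $\bx\in L^+$ be indecomposable. If $x_i\ge s_i$ for some $i$, then $\bx-s_i\be_i\in L^+$. So either $\bx = s_i\be_i$, or we have the decomposition $\bx=(\bx-s_i\be_i)+s_i\be_i$, which is impossible. Therefore every indecomposable is either one of the $n$ vectors $s_i\be_i$, or a nonzero point of $L$ in the half-open box $B=\prod_i[0,s_i)$. The box $B$ is a fundamental domain for $M_0$, so it contains exactly one representative of each coset of $L/M_0$, that is, exactly $d$ points of $L$. One of these is $\bo$. This gives at most $d-1+n$ indecomposables.

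\emph{Infinite case.} Assume $L$ is not PVR, so after relabeling $L\cap\real_{>0}\be_1=\emptyset$.
\begin{enumerate}
\item Every nonzero $\bx\in L^+$ is a finite sum of indecomposables. If $\bx=\bwy+\bz$ is a nontrivial decomposition, the coordinate sum strictly decreases on each part. All parts stay in the bounded set $L\cap[0,|\bx|]^n$, which is finite, so repeated decomposition terminates.
\item Suppose there are only finitely many indecomposables $\bv_1,\dots,\bv_k$. Then $L^+$ lies in the closed polyhedral cone $C$ they generate.
\item $C=\real^n_{\ge0}$. Since $L$ has full rank, it has a fundamental domain of diameter $\rho$. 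For any $\bu$ in the open orthant and large $T$, the ball of radius $\rho$ about $T\bu$ lies in the orthant and contains a point of $L^+$. Its direction tends to $\bu$, so $C$, being closed, contains the closed orthant.
\item $\real_{\ge0}\be_1$ is an extreme ray of $\real^n_{\ge0}=C$. An extreme ray of a cone generated by finitely many vectors must contain one of the generators. So some $\bv_j$ is a positive multiple of $\be_1$ and lies in $L$, which contradicts the choice of $\be_1$.
\end{enumerate}

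I expect the main obstacle to be step 3, the density-of-directions argument, together with the extreme-ray fact in step 4. Both are standard, but they need careful wording: the lattice point found near $T\bu$ must lie in $L^+$, and the limiting argument must give the whole closed orthant, including its boundary ray $\real_{\ge0}\be_1$.
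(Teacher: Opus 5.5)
Your proof is correct. It differs from the paper's mainly in the finite case.

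The paper fixes an arbitrary PR sublattice $M$ of index $d$ and moves coset representatives $\bz_1=\bo,\bz_2,\dots,\bz_d$ into $L^+$. It writes an indecomposable as $\bwy=\bz_k+\bx$ with $\bx\in M$ and splits $\bx$ into its negative and positive parts. From this it concludes that $\bwy$ is either a basis vector of $M$ or of the form $\bz_k+\bx_1$ with $\bx_1$ in a bounded box.

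You instead characterize PVR lattices as those meeting every coordinate axis. This produces a canonical PR sublattice $M_0=\bigoplus_i s_i\zed\be_i$ that contains every PR sublattice, so $d=[L:M_0]$, a fact the paper never states. The half-open box $\prod_i[0,s_i)$ then does all the work. An indecomposable with $x_i\ge s_i$ must equal $s_i\be_i$; otherwise it is the unique box representative of its coset. This actually proves that each nonzero coset contains at most one indecomposable, which is exactly what the count $d+n-1$ needs. The paper's closing sentence identifies these indecomposables with $\bz_2,\dots,\bz_d$ and tacitly assumes this, since its argument only confines $\bwy$ to a finite subset of $\bz_k+M$.

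In the infinite case both proofs rest on the same two facts:
\begin{itemize}
\item every element of $L^+$ is a finite sum of indecomposables;
\item large balls inside the orthant contain lattice points.
\end{itemize}
The paper asserts that $\real^n_{\ge0}\setminus C(V)$ is nonempty \emph{since $L$ is not PVR}, and hence contains a ball of radius $\mu(L)$. You show instead that $C$ is the whole closed orthant, then use the extreme ray $\real_{\ge0}\be_1$ to force a generator onto that axis. Your extreme-ray step, combined with the axis characterization of PVR, is precisely the justification that the paper's \emph{since $L$ is not PVR} leaves implicit.
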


\noindent
We prove Theorem~\ref{inf_many} in Section~\ref{ind_prop}, where we also review some properties of lattice semigroups and cones, and prove that vectors corresponding to positive successive minima in a lattice are indecomposable. In fact, the indecomposables are precisely the unique minimal generating set for the semigroup $L^+$, called the {\it Hilbert basis} for $L^+$ (Corollary~\ref{no_fin}). In Section~\ref{irred}, we discuss irreducible elements in lattice cones, observing that indecomposables are always irreducible but not vice versa. Each such cone contains only finitely many irreducibles, and they form the Hilbert basis for the lattice-point semigroup of this cone. Thus, it follows that while the semigroup $L^+$ of a non-PVR lattice $L$ contains infinitely many indecomposables, every positive lattice cone of $L$ contains only finitely many of these indecomposables.

In Section~\ref{ind_plane}, we focus on indecomposable elements in planar lattices, showing in particular that consecutive pairs of such indecomposables always form a basis for the lattice. The main result of this section is the following theorem.

\begin{thm} \label{ind_sum} Let $L \subset \real^2$ be a lattice of full rank and let $\bo \neq \bz \in L^+$ be a decomposable element. Then there exist indecomposables $\bu,\bv \in L^+$ and positive integers $\alpha, \beta$ so that $\bz = \alpha \bu + \beta \bv$. Hence, $|\bu| + |\bv| \leq |\bz|$.
\end{thm}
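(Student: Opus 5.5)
Since $n=2$, the positive cone $\real^2_{\geq 0}$ is two-dimensional, so the natural plan is to order the indecomposables of $L^+$ by angle and use the fact (announced for Section~\ref{ind_plane}) that consecutive indecomposables form a basis of $L$. Because the indecomposables are the Hilbert basis of $L^+$ (Corollary~\ref{no_fin}), and we deal with a cone in the plane, I expect the following structure. The indecomposables can be listed as $\dots,\bu_{k-1},\bu_k,\bu_{k+1},\dots$ in order of increasing argument in $[0,\pi/2]$; this list is finite in the PVR case and possibly bi-infinite otherwise. Every ray of $\real^2_{\geq 0}$ other than those through the $\bu_k$ then lies strictly between two consecutive indecomposables $\bu_k,\bu_{k+1}$, except possibly for rays accumulating toward a boundary axis that contains no lattice point. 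In that exceptional case, any given nonzero $\bz\in L^+$ still has strictly positive angle to the axis, so it falls into some sector between consecutive ones. I would first establish this: for a given $\bz$, the cone $\real_{\geq 0}\bz$ is contained in the cone spanned by two consecutive indecomposables $\bu,\bv$ (consecutive in the sense that no indecomposable lies strictly between them in angle). Finiteness near $\bz$ follows from the remark that any positive lattice cone contains only finitely many indecomposables (Section~\ref{irred}), applied to a slightly larger cone around $\bz$.

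Next, I would take this consecutive pair $\bu,\bv$, invoke the planar lemma that it is a basis of $L$, and write $\bz=\alpha\bu+\beta\bv$ with $\alpha,\beta\in\zed$. Since $\bz$ lies in the closed cone spanned by $\bu,\bv$ and these vectors are linearly independent, $\alpha,\beta\geq 0$. If $\bz$ lies strictly inside the cone, both coefficients are positive integers, and we are done. If $\bz$ lies on the ray of $\bu$, say, then $\bz=\alpha\bu$ with $\alpha\geq 2$ since $\bz$ is decomposable; more precisely, $\bu$ must be the primitive vector on that ray, because a primitive vector on a ray of the positive quadrant is indecomposable? That is not automatic, so I would handle this case as follows. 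A decomposable $\bz$ on the ray of an indecomposable $\bu$ lies in the cone of $\bu$ and its neighbor $\bv$, with $\bz=\alpha\bu+0\cdot\bv$. To obtain a representation with positive coefficients, I would rewrite it as $\bz=(\alpha-1)\bu+1\cdot\bu$, which requires $\alpha\geq 2$. If the statement insists on $\bu\neq\bv$, I would instead check whether this degenerate case is meant to be allowed. Alternatively, I could argue that $\bz=\bwy+\bz'$ with both summands nonzero and positive, and examine their angles. I expect the intended reading to permit $\bu=\bv$ or a zero coefficient, and I would note this.

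Finally, the inequality follows directly. Since all coordinates are nonnegative and $\alpha,\beta\geq1$, each coordinate of $\bz$ is at least the corresponding coordinate of $\bu+\bv$. Hence $|\bz|\geq|\bu+\bv|$. However, $|\bu+\bv|=\max_i(u_i+v_i)$, which is at most $|\bu|+|\bv|$ but not necessarily at least it, so I would check the claim carefully. For nonnegative vectors one only gets $|\bu+\bv|\geq\max(|\bu|,|\bv|)$, so the stated bound $|\bu|+|\bv|\leq|\bz|$ probably requires an additional argument or an interpretation; if it fails as stated, I would prove $\max(|\bu|,|\bv|)\leq|\bz|$ instead. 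The main obstacle, I expect, is the first step: rigorously showing that the angular ordering of the (possibly infinite) indecomposables has no gaps. This means every sector between consecutive indecomposables contains no other indecomposable and every ray is covered, which relies on the basis lemma for consecutive pairs.
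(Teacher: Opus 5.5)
Your route to $\bz=\alpha\bu+\beta\bv$ is the paper's route. You put $\bz$ in the cone of a consecutive pair, use Lemma~\ref{ind_basis} with Lemma~\ref{cone_sem} in the interior case, and take $\bu=\bv$ with coefficients $(a-1,1)$ when $\bz=a\bu$ lies on a boundary ray. The covering step you flag as the main obstacle is Lemma~\ref{cone_cover} with Remark~\ref{cover}, which the paper proves before the theorem using Kronecker's theorem, so you can cite it. Note that your finiteness remark alone does not produce indecomposables on both sides of $\bz$.

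A short self-contained alternative runs as follows. If an indecomposable $\bq$ lies on the ray $\real_{\geq 0}\bz$, then $\bz=k\bq$ with an integer $k\geq 2$, and $\bu=\bv=\bq$ works. Otherwise, the indecomposable summands of $\bz$ given by Lemma~\ref{sum} cannot all lie on one side of that ray, so there are indecomposables $\bw,\bw'$ strictly on both sides. The cone $C(\bw,\bw')$ contains only finitely many indecomposables: they are irreducible there, so Lemma~\ref{irr} confines them to $P(\bw,\bw')\cup\{\bw,\bw'\}$. The angularly nearest ones to $\bz$ on each side then form a consecutive pair with $\bz$ in the interior of their cone.

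Your suspicion about the final inequality is correct, and it exposes an error in the paper. The proof asserts $\bu,\bv>\bo$ and $|\bu|+|\bv|=|\bu+\bv|$; the latter is false for the sup-norm, and so is the inequality itself. For $L=\zed^2$ and $\bz=(1,1)$, the only indecomposables are $\be_1,\be_2$ and the only admissible representation is $\bz=\be_1+\be_2$, giving $|\be_1|+|\be_2|=2>1=|\bz|$.

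This is not a PVR artifact. Let $L$ have basis $(1,s),(s,1)$ with $s\in(0,1/2)$ irrational. Then $L$ meets the axes only in $\bo$, so $L$ is not PVR. The only nonzero $\bwy\in L^+$ with $\bwy\leq\bz=(1+s,1+s)$ are $(1,s)$, $(s,1)$ and $\bz$ itself. Hence $\bz=(1,s)+(s,1)$ is forced, and $|\bu|+|\bv|=2>1+s=|\bz|$.

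What the argument does give is exactly your fallback, $\max(|\bu|,|\bv|)\leq|\bu+\bv|\leq|\bz|$. The additive bound survives in the degenerate case $\bu=\bv$, since $2|\bu|\leq a|\bu|$ for $a\geq 2$. It would also hold in general for the coordinate-sum norm, which is additive on $\real^2_{\geq 0}$.
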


\noindent
In other words, every positive point of a lattice in $\real^2$ can be represented as a positive linear combination of at most two indecomposables. This result should be viewed in the context of Carath\'eodory's theorem which states that every point in a convex cone in $\real^n$ can be represented as a positive linear combination of at most $n$ of the cone generators. Furthermore, any point in a discrete cone (set of integer points in a convex cone) can be represented as a positive linear combination of irreducibles in this cone, which are precisely the elements of the Hilbert basis for the cone. This cone is said to have the {\it discrete Carath\'eodory's property} if such representation is possible by no more than $n$ elements of the Hilbert basis for every element. It is known that every discrete cone  in dimensions $n \leq 3$ has discrete Carath\'eodory's property, while it is no longer true in general for cones in dimensions $n \geq 6$; see~\cite{bruns} for details. Our Theorem~\ref{ind_sum} is the 2-dimensional analogue of the discrete Carath\'eodory's property for indecomposables in the positive orthant of a lattice: the key difference here, complicating things, is that $\real^2_{\geq 0}$ is not generated by vectors of $L$ as a cone unless $L$ is PVR.

We separately consider indecomposable elements in lattices coming from fractional ideals in real quadratic number fields, in which case we can obtain a counting estimate. 
Let~$K$ be a real quadratic field with ring of integers~$\O_K$ and discriminant~$\Delta_K$. Let $J \subseteq K$ be a fractional ideal, and write $\Nn_K(J)$ for the norm of~$J$. Let $L = \Sigma_K(J)$, then $\det(L) = \Nn_K(J) \sqrt{|\Delta_K|}$. In this case, $L$ contains infinitely many indecomposables, hence we can define a counting function
\begin{eqnarray}
\label{cnt_funct}
\N_{K,J}(T) & = & \left\{ \bx \in L^+ : \bx \text{ indecomposable}, |\bx| \leq T \right\} \nonumber \\
& = & \left\{ \alpha \in J^+ : \alpha \text{ indecomposable}, H_K(\alpha) \leq T \right\},
\end{eqnarray}
for a positive real number $T$. In Section~\ref{counting}, we produce the following estimate.

\begin{thm} \label{main_cnt} Let the notation be as above, then
$$\N_{K,J}(T) \leq \left( \log \left( \frac{1+\sqrt{4 (\det(L))^2+1}}{2 \det(L)} \right) \right)^{-1} (2 \log T + 1) + 1.$$
\end{thm}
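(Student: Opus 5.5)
Since $L$ has full rank in $\real^2$, the plan is to reduce everything to the action of totally positive units. In $L^+$ the indecomposables are precisely the elements $\alpha\in J^+$ that are indecomposable in $J^+$. If $\varepsilon$ is a totally positive unit, multiplication by $\varepsilon$ is an automorphism of $J$ that preserves $J^+$, so it permutes the indecomposables.

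By the indecomposables-in-the-plane result of Section~\ref{ind_plane}, we can order the indecomposables of $L^+$ by slope as $\ldots,\bx_{-1},\bx_0,\bx_1,\ldots$, with consecutive pairs forming bases of $L$. Their coordinates are then monotone: the first coordinate increases and the second decreases, or the reverse. Multiplication by the totally positive fundamental unit $\varepsilon>1$ acts on this chain as a shift $\bx_i\mapsto \bx_{i+k}$ for some period $k\ge 1$. I will use this to parametrize the indecomposables by a single increasing quantity.

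\textbf{Key steps.}

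First, for an indecomposable $\bx=(x_1,x_2)$, consider the ratio $r=x_1/x_2$. The condition $|\bx|\le T$ combined with $x_1x_2=\Nn_K(\alpha)\ge c$ bounds $\log r$ to an interval. More precisely, $x_1\le T$ and $x_2\ge \Nn/T$ give $r\le T^2/\Nn$, and symmetrically $r\ge \Nn/T^2$. Since $\Nn_K(\alpha)\ge \Nn_K(J)$ for $\alpha\in J$, we get $|\log r|\le 2\log T-\log \Nn_K(J)$, an interval of length roughly $2(2\log T+\text{const})$.

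Second, I need a lower bound on the gap in $\log r$ between consecutive indecomposables $\bx_i,\bx_{i+1}$. Since they form a basis, $|x_{i,1}x_{i+1,2}-x_{i,2}x_{i+1,1}|=\det L$. Writing $r_i=x_{i,1}/x_{i,2}$, this gives
$$x_{i,2}x_{i+1,2}\,(r_{i+1}-r_i)=\det L.$$
I want to show $r_{i+1}/r_i\ge \lambda^2$ with $\lambda=\frac{1+\sqrt{4D^2+1}}{2D}$ and $D=\det L$. Note that $\lambda$ is the positive root of $D\lambda^2-\lambda-D=0$, i.e.\ $\lambda-\lambda^{-1}=1/D$. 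This suggests a constraint of the form $\sqrt{r_{i+1}/r_i}-\sqrt{r_i/r_{i+1}}\ge 1/D$.

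Such a constraint follows if $\sqrt{x_{i,1}x_{i,2}\,x_{i+1,1}x_{i+1,2}}\le D^2$ after suitable normalization. Concretely, dividing the determinant identity by $\sqrt{\Nn(\bx_i)\Nn(\bx_{i+1})}$ gives
$$\sqrt{r_{i+1}/r_i}-\sqrt{r_i/r_{i+1}}=\frac{D}{\sqrt{\Nn_i\Nn_{i+1}}}.$$
So I need $\Nn(\bx_i)\le D^2$ for indecomposables. This is a known type of bound (Kala--Yatsyna style: indecomposables have norm at most $\Delta_K/4$ times $\Nn(J)$), and it should follow from the continued-fraction/Minkowski argument. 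It may need normalization by scaling $J$, which would explain the appearance of $\det L$.

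Third, count. Each step multiplies $r$ by at least $\lambda^2$, so the number of indecomposables with $\log r$ in an interval of length at most $2(2\log T+1)$ is at most $\frac{2\log T+1}{\log\lambda}+1$. This is the claimed bound.

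\textbf{Main obstacle.} The hard step is the norm bound $\Nn(\alpha)\le D^2$, equivalently the gap inequality. I would first try to derive it directly: use the fact that $\bx_{i-1}+\bx_{i+1}=c\,\bx_i$ for an integer $c\ge 2$, together with the observation that no lattice point lies strictly below the polygonal chain. Any positive integer multiple $\bx_i+m\bx_{i+1}$ lying in the orthant would contradict indecomposability. This forces the relevant coordinates to be small relative to $D$.
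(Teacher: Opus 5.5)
Your reduction of the count to a packing problem in $\log(x_1/x_2)$ is correct, and it is a tidier route than the paper's area bookkeeping. The identity $\sqrt{r_{i+1}/r_i}-\sqrt{r_i/r_{i+1}}=D/\sqrt{\Nn_i\Nn_{i+1}}$ leads to the same equation $R-R^{-1}=1/D$ that drives Lemma~\ref{area_error}, and a hyperbolic sector of $\log r$-width $2\log\lambda$ has area exactly $D^2\log\lambda$.

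\textbf{The gap.} Everything hinges on the inequality $u_1u_2\le D^2$ for every indecomposable $\bu\in L^+$, and you leave it unproved. The direct route you sketch cannot prove it. The relation $\bx_{i-1}+\bx_{i+1}=c\,\bx_i$, the basis property of consecutive indecomposables, and the emptiness of the region below the Klein polygon hold in every planar lattice, and there the inequality is false. Take $L=\spn_{\zed}\{(1,0),(\tfrac12-\delta,1)\}$ with small $\delta>0$, so $\det L=1$. The points $(\tfrac12-n\delta,\,n)$ with $n$ odd and $n\delta<\tfrac12$ are indecomposable, and for $n\approx 1/(4\delta)$ their coordinate product is about $1/(16\delta)$. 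So the bound must come from arithmetic input from $J$, and an unproved appeal to a ``Kala--Yatsyna style'' bound does not supply it.

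\textbf{The missing idea.} This is Lemma~\ref{log_curve}. Suppose $u_1u_2>D^2$. Minkowski's theorem applied to the box $|x_i|\le\sqrt{u_i}-\eps$, whose area exceeds $4D$, gives $0\neq\beta\in J$ with $\sigma_i(\beta)^2<u_i$. Then $\beta^2$ is a nonzero element of $J^+$ with $\Sigma_K(\beta^2)<\bu$, contradicting Lemma~\ref{less}.

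This step uses $\beta^2\in J$, i.e.\ $J^2\subseteq J$, so it requires $J$ to be integral. No rescaling of $J$ can manufacture the bound, since under $J\mapsto cJ$ norms scale by $c^2$ while $D^2$ scales by $c^4$. Concretely, for $J=\frac1m\O_K$ with $m^2>|\Delta_K|$, the indecomposable $1/m$ violates it. For large $m$ the displayed estimate itself fails, e.g.\ at $T=1$, because $1/m$ and $\varepsilon_+^{\pm1}/m$ are then counted, where $\varepsilon_+$ is the totally positive fundamental unit.

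Integrality also justifies your other unexplained step, replacing $-\log\Nn_K(J)$ by $1$ in the length of the $\log r$-interval. For integral $J$ you have $\Nn_K(J)\ge 1$, and your argument then even gives the sharper bound $\frac{2\log T}{\log\lambda}+1$.
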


\noindent
To prove this theorem, we first show that all indecomposables are contained under the hyperbola $y = \frac{(\det(L))^2}{x}$ in the plane. This observation is based on an elegant argument of Kala and Yatsyna~\cite{kala_yatsyna}, which employs Minkowski's Convex Body Theorem. We then obtain the counting estimate by using the basis property of consecutive indecomposables along with an area computation. We are now ready to proceed. 

\bigskip

\section{Properties of indecomposable elements in lattices}
\label{ind_prop}

In this section we develop some necessary notation and prove Theorem~\ref{inf_many}. A vector $\bx \in L$ is called {\it primitive} if it is not a scalar multiple of another vector in $L$. Indecomposable vectors are primitive, however the converse is not necessarily true. It is our goal to understand the properties of indecomposable elements in $L^+$. We start with two simple but useful lemmas. Let us say that $\bx \leq \bwy$ (respectively, $\bx < \bwy$) if $x_i \leq y_i$ (respectively, $x_i < y_i$) for every $i$.

\begin{lem} \label{less} Let $L \subset \real^n$ be a lattice of rank $n$ and let $\bx \in L^+$. Then $\bx$ is indecomposable if and only if there does not exist $\bo \neq \bwy \in L^+$ such that $\bwy \leq \bx$.
\end{lem}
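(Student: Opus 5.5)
The plan is to prove both directions directly from the definitions by the substitution $\bz = \bx - \bwy$, with one caveat about the statement first. As written, $\bwy = \bx$ itself always satisfies $\bo \neq \bwy \leq \bx$. So the condition must be read as excluding $\bwy = \bx$: a nonzero $\bx \in L^+$ is indecomposable if and only if there is no $\bwy \in L^+$ with $\bwy \neq \bo$, $\bwy \neq \bx$ and $\bwy \leq \bx$. I would state this reading explicitly and then prove it.

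For the direction ``decomposable implies such $\bwy$ exists'', write $\bx = \bwy + \bz$ with $\bwy, \bz \in L^+$ nonzero. Since $\bz \in L^+$, every coordinate satisfies $z_i \geq 0$, so $y_i = x_i - z_i \leq x_i$, that is, $\bwy \leq \bx$. Also $\bwy \neq \bo$ by assumption, and $\bwy \neq \bx$ because $\bz \neq \bo$. Hence $\bwy$ is the required vector, and by contraposition, if no such $\bwy$ exists then $\bx$ is indecomposable.

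For the converse, suppose $\bwy \in L^+$ satisfies $\bo \neq \bwy \leq \bx$ and $\bwy \neq \bx$, and set $\bz = \bx - \bwy$. Then $\bz \in L$ because $L$ is a group, and $z_i = x_i - y_i \geq 0$ for all $i$ because $\bwy \leq \bx$, so $\bz \in L^+$. Moreover $\bz \neq \bo$ since $\bwy \neq \bx$. Thus $\bx = \bwy + \bz$ is a decomposition into nonzero elements of $L^+$, so $\bx$ is decomposable.

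Each step is routine. The only point needing care is the degenerate case $\bwy = \bx$ in the statement, which the reading above handles. The key structural facts are that $L$ is closed under subtraction and that $L^+$ is cut out by coordinatewise inequalities, so $\bwy \leq \bx$ is equivalent to $\bx - \bwy \in L^+$.
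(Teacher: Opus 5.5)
Your proof is correct and follows the same route as the paper: for one direction you set $\bz = \bx - \bwy$, and for the other you compare coordinates in $\bx = \bwy + \bz$. Your caveat is also justified: the paper's statement and proof miss the case $\bwy = \bx$, since its first direction never checks that $\bz \neq \bo$, so the lemma should be read with $\bwy \neq \bx$ (and $\bx \neq \bo$), exactly as you do.
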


\proof
Suppose there exists $\bo \neq \bwy \in L^+$ such that $\bwy \leq \bx$. Let $\bz = \bx-\bwy$, then $\bz \in L^+$. Hence, $\bx = \bwy+\bz$. Now, suppose no such $\bwy$ exists. If $\bx$ is decomposable, then there exist some nonzero $\bwy,\bz \in L^+$ such that 
$$\bx = \begin{pmatrix} x_1 \\ \vdots \\ x_n \end{pmatrix} = \begin{pmatrix} y_1+z_1 \\ \vdots \\ y_n + z_n \end{pmatrix} = \bwy + \bz,$$
and so $x_i = y_i + z_i \geq y_i$ for each $1 \leq i \leq n$, i.e. $\bwy \leq \bx$, a contradiction. Hence, $\bx$ is indecomposable.
\endproof

\begin{lem} \label{sum} Every element in $L^+$ is a positive integer linear combination of finitely many indecomposables.
\end{lem}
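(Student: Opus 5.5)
We argue by strong induction, using a size function that takes only finitely many values below any bound. For $\bx \in L^+$ set $f(\bx) = \sum_{i=1}^n x_i$, which is a nonnegative real number. The key observation is that for any $T>0$ the set $\{\bwy \in L^+ : f(\bwy) \leq T\}$ is finite. Indeed, it lies in the bounded simplex $\{\bwy \in \real^n_{\geq 0} : \sum y_i \leq T\}$, and $L$ is discrete. Moreover, $f(\bwy) = 0$ with $\bwy \in L^+$ forces $\bwy = \bo$. Hence the values of $f$ on $L^+ \setminus \{\bo\}$ form a discrete subset of $(0,\infty)$, and we may induct on them.

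Now suppose the statement fails and let $\bx \in L^+$ be a nonzero counterexample minimizing $f(\bx)$; such a minimizer exists by the finiteness just noted. If $\bx$ is indecomposable, then $\bx = 1\cdot \bx$ is already the required representation, a contradiction. Otherwise, by definition (or equivalently by Lemma~\ref{less}), we can write $\bx = \bwy + \bz$ with nonzero $\bwy,\bz \in L^+$. Since all coordinates are nonnegative and $\bz \neq \bo$, we get $f(\bz) > 0$, so $f(\bwy) = f(\bx) - f(\bz) < f(\bx)$, and symmetrically $f(\bz) < f(\bx)$. By minimality, both $\bwy$ and $\bz$ are positive integer linear combinations of finitely many indecomposables. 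Adding the two representations and combining coefficients of any repeated indecomposables gives such a representation for $\bx$, a contradiction.

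The zero vector is either excluded or treated as the empty combination; we will note this convention explicitly. The only delicate point is ensuring that the descent terminates. This is exactly what the finiteness of $\{\bwy \in L^+ : f(\bwy)\le T\}$ supplies: it rules out an infinite strictly decreasing chain of values of $f$, since such a chain would consist of infinitely many distinct elements of $L^+$ all lying in the bounded set $\{f \le f(\bx)\}$. Once this is in place, the rest of the argument is routine.
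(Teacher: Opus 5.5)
Your proof is correct and is essentially the paper's argument: you split a decomposable element into two nonzero pieces of strictly smaller size, and discreteness of $L$ (only finitely many points of $L^+$ below any size bound) guarantees termination. The only differences are cosmetic. You measure size by the additive functional $\sum_i x_i$ rather than the Euclidean norm $\|\bx\|$, which makes the strict decrease immediate, and you phrase termination as a minimal-counterexample argument rather than as an iterated decomposition.
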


\proof
Let $\bx \in L^+$ and suppose it is not indecomposable, then there exist $\bx_1, \bx_2 \in L^+$ such that $\bx = \bx_1 + \bx_2$, and so
$$\|\bx_1\|, \|\bx_2\| < \|\bx\|.$$
We can continue decomposing $\bx_1,\bx_2$ in the same manner, each time obtaining elements in $L^+$ of smaller norm. Since a lattice $L$ is a discrete subset of $\real^n$, there are only finitely many elements in $L$ of norm $\leq \|\bx\|$. Hence, this process must terminate, meaning that we obtain a positive integer linear combination of indecomposables equal to $\bx$.
\endproof

Let $X = \{ \bx_1,\dots,\bx_n \} \subset L^+$ be a collection of linearly independent vectors. We define the {\it cone of $X$} to be
$$C(X) := \left\{ \sum_{i=1}^n a_i \bx_i : a_1,\dots,a_n \in \real_{\geq 0} \right\}$$
and the {\it semigroup of $X$} to be
$$S(X) := \left\{ \sum_{i=1}^n a_i \bx_i : a_1,\dots,a_n \in \zed_{\geq 0} \right\}.$$
If $X$ is a basis for $L$, we refer to $C(X)$ and $S(X)$ as a {\it basis cone} and a {\it basis semigroup}, respectively. Properties of such positive cones and semigroups have been investigated in~\cite{lf_sw}. In particular, Lemma~2.1 of~\cite{lf_sw} guarantees that $L$ contains infinitely many positive bases; more precisely, this lemma can be formulated as follows.

\begin{lem} \label{prim_basis} Let $\bx_1 \in L^+$ be a primitive vector. Then there exists a positive basis $\bx_1,\bx_2,\dots,\bx_n \in L^+$.
\end{lem}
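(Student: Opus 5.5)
The plan is to extend $\bx_1$ to a basis of $L$ and then correct the remaining basis vectors by large multiples of $\bx_1$, provided $\bx_1$ lies in the interior of the orthant. Since $\bx_1$ is primitive, it can be completed to a basis $\bx_1,\by_2,\dots,\by_n$ of $L$; this is the standard fact that a primitive vector of a lattice extends to a basis. We then look for a new basis of the form $\bx_1, \by_2 + k_2\bx_1,\dots,\by_n+k_n\bx_1$ with integers $k_j$. Any such collection is again a basis of $L$, since the change-of-basis matrix is unimodular (upper triangular with ones on the diagonal). The task therefore reduces to choosing the $k_j$ so that every new vector lies in $L^+$.

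If $\bx_1$ has all coordinates strictly positive, this choice is immediate. For each $j$ and each coordinate $i$, the $i$-th coordinate of $\by_j + k\bx_1$ equals $y_{j,i} + k x_{1,i}$, which is nonnegative as soon as
$$k \geq \max_i \bigl(-y_{j,i}/x_{1,i}\bigr).$$
Taking $k_j$ this large for every $j$ gives a positive basis.

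The main obstacle is a primitive $\bx_1 \in L^+$ with some zero coordinates, say $x_{1,i}=0$ for $i \in I \neq \emptyset$. Adding multiples of $\bx_1$ then cannot repair negative entries in the coordinates of $I$. My first approach is to first arrange, by a unimodular change among $\by_2,\dots,\by_n$ alone, that the vectors are nonnegative in the coordinates of $I$; only afterwards would I add multiples of $\bx_1$ to fix the coordinates outside $I$. Concretely, the projection of $L$ to the coordinates in $I$ is the image of $\spn_\zed\{\by_2,\dots,\by_n\}$. I would choose a vector $\bw$ in the sublattice $L' = \spn_\zed\{\by_2,\dots,\by_n\}$ whose projection to $I$ has all coordinates strictly positive, and which is primitive in $L'$; such a $\bw$ exists because the projection has full rank $|I|$ on $\real^I$, as $L$ spans $\real^n$. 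By the same primitive-extension fact, $\bw$ extends to a basis $\bw,\bw_3,\dots$ of $L'$. Adding large multiples of $\bw$ to the $\bw_j$ makes every vector of this basis of $L'$ nonnegative on $I$; this is the strictly positive case from the previous paragraph, applied on the coordinates $I$ only.

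Finally I add large multiples of $\bx_1$ to all of $\bw,\bw_3,\dots$, which makes the coordinates outside $I$ nonnegative without disturbing those in $I$. Together with $\bx_1$, the resulting vectors form a basis of $L$ contained in $L^+$. The point to check most carefully is the existence of a primitive $\bw$ in $L'$ with strictly positive $I$-projection. I would obtain it by taking any lattice vector in $L'$ whose projection lies in the open orthant of $\real^I$, which exists since the projection spans $\real^I$ and is closed under integer combinations, and then dividing it by its content in $L'$; this division does not change the signs of the coordinates.
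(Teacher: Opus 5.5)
Your proof is correct. The paper does not prove this lemma; it quotes it as Lemma~2.1 of \cite{lf_sw}. What you have written is therefore a self-contained proof, not an alternative to an argument in the text.

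When $\bx_1$ lies in the open orthant, extending $\bx_1$ to a basis and shearing the remaining vectors by large multiples of $\bx_1$ settles everything. You correctly identify the case where $\bx_1$ has zero coordinates as the only real issue. Your fix works for the following reasons.

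\begin{itemize}
\item Since $\bx_1$ vanishes on the index set $I$, the projection of $L$ onto $\real^I$ equals the projection of the complementary sublattice $L'$, so it spans $\real^I$.
\item That projection therefore contains a full-rank lattice of $\real^I$. Such a lattice meets the open orthant, because the orthant contains balls of radius larger than its covering radius. You state this step tersely, but it is valid.
\item Dividing by the content in $L'$ preserves signs.
\item You then apply two unimodular shears. The first, by $\bw$ inside $L'$, makes the $I$-coordinates nonnegative. The second, by $\bx_1$, makes the remaining coordinates nonnegative; it is effective because $\bx_1$ is strictly positive off $I$, and it leaves the $I$-coordinates unchanged.
\item Both shears keep you among bases of $L=\zed\bx_1\oplus L'$.
\end{itemize}
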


\noindent
The following observation will also be important to us; it is Lemma~2.2 of~\cite{lf_sw}.

\begin{lem} \label{cone_sem} Let $X$ be a positive basis for $L$. Then
$$L^+ \cap C(X) = S(X).$$
\end{lem}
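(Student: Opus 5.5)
Here $X=\{\bx_1,\dots,\bx_n\}$ is a basis of $L$, so every $\bz\in L$ can be written uniquely as $\bz=\sum_{i=1}^n a_i\bx_i$ with $a_i\in\zed$. The plan is to compare this expansion with the real coordinates that describe membership in $C(X)$. Since $X$ is linearly independent, it is also a basis of $\real^n$. Hence the real coefficients of $\bz$ with respect to $X$ are unique, and they coincide with the integers $a_i$.

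\emph{Inclusion $S(X)\subseteq L^+\cap C(X)$.} Let $\bz=\sum a_i\bx_i$ with $a_i\in\zed_{\geq 0}$. Then $\bz\in L$ because the $\bx_i$ lie in $L$ and the $a_i$ are integers. Also $\bz\in C(X)$, since $\zed_{\geq 0}\subset\real_{\geq 0}$. Finally, each $\bx_i$ lies in $L^+$, so it has nonnegative coordinates. A nonnegative combination of such vectors again has nonnegative coordinates, so $\bz\in\real^n_{\geq 0}$ and therefore $\bz\in L^+$.

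\emph{Inclusion $L^+\cap C(X)\subseteq S(X)$.} Let $\bz\in L^+\cap C(X)$. Because $\bz\in C(X)$, we can write $\bz=\sum b_i\bx_i$ with $b_i\in\real_{\geq 0}$. Because $\bz\in L$ and $X$ is a basis of $L$, we can also write $\bz=\sum a_i\bx_i$ with $a_i\in\zed$. By linear independence of $X$ over $\real$, the two expansions agree, so $a_i=b_i$ for every $i$. Thus each $a_i\in\zed\cap\real_{\geq 0}=\zed_{\geq 0}$, and $\bz\in S(X)$.

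The only point needing care is that the integer and real coordinates of a lattice vector coincide, which is exactly where the hypothesis that $X$ is a basis of $L$ is used. If $X$ were merely a linearly independent subset of $L^+$, the real coefficients could be non-integral and the inclusion $L^+\cap C(X)\subseteq S(X)$ could fail. Apart from this, there is no real obstacle.
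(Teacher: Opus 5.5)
Your proof is correct. The key step is the one you isolate: the integer coordinates of $\bz$ (from $X$ being a basis of $L$) coincide with its nonnegative real coordinates (from $\bz \in C(X)$), by uniqueness of coordinates in $\real^n$. The paper does not prove this lemma itself; it imports it as Lemma~2.2 of~\cite{lf_sw}, and your argument is the standard self-contained proof of that fact.
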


We are now ready for the proof of the theorem.

\proof[Proof of Theorem~\ref{inf_many}]
First suppose that $L \subset \real^n$ is a PR lattice and let $X = \{\bx_1,\dots,\bx_n\} \subset L^+$ be the corresponding positive orthogonal basis. Then these basis vectors must be multiples of the standard basis vectors $\be_1,\dots,\be_n$ in $\real^n$; we can assume without loss of generality that $\bx_i = t_i \be_i$ with $t_i \in \real_{>0}$ for $1 \leq i \leq n$. Let $\bwy \in L^+$. Since $X$ is a basis for $L$,
$$\bwy = \sum_{i=1}^n a_i t_i \be_i,$$
for some integers $a_1,\dots,a_n$ and for each $1 \leq i \leq n$, $y_i = a_i t_i \geq 0$. Then all $a_i \geq 0$, and so
$$L^+ = S(X).$$
Hence, $\bwy \in L^+$ is indecomposable if and only if $\bwy = \bx_i$ for some $1 \leq i \leq n$. This implies that $L^+$ contains precisely $n$ indecomposable elements.

Now, assume that $L$ is a PVR lattice and $M \subset L$ is a PR sublattice of $L$ of index $d$. We can always select coset representatives $\bz_1,\dots,\bz_d$ of $M$ in $L$ to be in $L^+$. Indeed, suppose that, say, $\bz_1$ is not in $L^+$, i.e., $z_{1j} < 0$ for some $1 \leq j \leq n$. Then we can pick $\bv \in M^+$ to be an element with $v_j > |\bz_1|$ for every $1 \leq j \leq n$ and replace $\bz_1$ with $\bz_1+\bv \in L^+$. Hence, from here on fix a set of coset representatives $\bz_1,\dots,\bz_d \in L^+$ of $M$ in $L$ with $\bz_1 = \bo$. Let $\bwy \in L^+$, then $\bwy = \bz_k + \bx$ for some unique $1 \leq k \leq d$ and $\bx \in M$. Suppose that $\bwy$ is indecomposable, then we must have $\bx \not\in M^+$. Let us write $\bx = \bx_1+\bx_2$ where for each $1 \leq j \leq n$
$$x_{1j} = \left\{ \begin{array}{ll}
x_j & \mbox{if $x_j < 0$} \\
0 & \mbox{if $x_j \geq 0$}
\end{array}
\right.,\ 
x_{2j} = \left\{ \begin{array}{ll}
0 & \mbox{if $x_j < 0$} \\
x_j & \mbox{if $x_j \geq 0$},
\end{array}
\right.
$$
so $\bwy = (\bz_k+\bx_1) + \bx_2$ with $\bz_k+\bx_1 \in \real^n_{\geq 0}$. Let $t_1 \be_1,\dots,t_n \be_n$ be a positive orthogonal basis for $M$, as above. Since $\bx \in M$, we must have $x_j = a_j t_j$ for some integers $a_j$, $1 \leq j \leq n$, with $a_j > 0$ for $j$'s corresponding to nonzero coordinates of $\bx_2$ and $a_j < 0$ for $j$'s corresponding to nonzero coordinates of $\bx_1$. Then $\bx_1,\bx_2 \in M$ and $\bx_2$ is a positive integer linear combination of a positive basis in $M$, hence $\bx_2 \in M^+ \subseteq L^+$. We have
$$\bz_k+\bx_1 = \bwy - \bx_2 \in L,$$
since $\bwy \in L$ and $\bx_2 \in M \subseteq L$. Hence, $\bz_k+\bx_1 \in L^+$, thus either $\bx_1 = -\bz_k$ or $\bx_2 = \bo$, since $\bwy$ is indecomposable. If $\bx_2 \neq \bo$, we must have $\bz_k = -\bx_1 \in M$, which means that $\bx_1 = \bz_1 = \bo$; there are only finitely many options for $\bwy = \bx_2$ (specifically, $\bx_2$ is an indecomposable in the orthogonal lattice $M$, of which there are finitely many as we showed above). If $\bx_2 = 0$, then $\bwy = \bz_k + \bx_1 \in L^+$, and hence for each $1 \leq j \leq n$,
$$-z_{kj} \leq x_{1j} \leq 0,$$
i.e., $\bx_1$ is an element of $M$ in a box of side-lengths $z_{k1},\dots,z_{kn}$; this is a finite set. Putting these observations together, we proved that there are only finitely many indecomposables in $L^+$. Further, these indecomposables are either coset representatives $\bz_2,\dots,\bz_d$ or elements of the positive orthogonal basis for $M$, hence there are at most $d+n-1$ of them.

Next, suppose that $L$ is not PVR. Arguing towards a contradiction, assume that $L$ has only finitely many indecomposables, let $V = \{ \bv_1,\dots,\bv_m \} \in L^+$ be their complete set. Let $C(V)$ be cone spanned by these indecomposables, i.e.
$$C(V) := \left\{ \sum_{i=1}^m a_i \bv_i : a_1,\dots,a_m \in \real_{\geq 0} \right\} \subset \real^n_{\geq 0}$$
and let $C(V)^* = \real^n_{\geq 0} \setminus C(V)$. Since $L$ is not PVR, $C(V)^*$ must be nonempty, and hence unbounded. Let us write $\BB_n(r)$ for the ball of radius $r$ centered at the origin in $\real^n$. Then $C(V)^*$ contains Euclidean balls of arbitrarily large radius; in particular, it contains some Euclidean ball $B$ of radius at least $\mu(L)$, where
\begin{equation}
\label{cov_rad}
\mu(L) = \min \left\{ r \in \real_{> 0} : L + \BB_n(r) = \real^n \right\}
\end{equation}
is the covering radius of $L$. Then $B$ must contain a point of $L$, call it $\bwy$. Since $\bwy \not\in C(V)$, it cannot be represented as a nonnegative linear combination of elements of $V$. On the other hand, Lemma~\ref{sum} implies that there must exist a representation of $\bwy$ as a positive integer linear combination of indecomposables, meaning that there must exist indecomposables outside of the set $V$, a contradiction. This concludes the proof.
\endproof
\smallskip

The {\it Hilbert basis} of an affine semigroup is the unique minimal generating set. We record here an immediate consequence of Lemma~\ref{sum} and Theorem~\ref{inf_many}, characterizing indecomposables as the Hilbert basis for the semigroup $L^+$.

\begin{cor} \label{no_fin} The set of indecomposables in the semigroup $L^+$ is precisely the Hilbert basis for $L^+$, which is finite if $L$ is a PVR lattice and infinite otherwise.
\end{cor}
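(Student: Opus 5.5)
The corollary has two parts: the indecomposables form the Hilbert basis of $L^+$, and that basis is finite exactly when $L$ is PVR. I will prove that the set $\mathcal{I}$ of indecomposables in $L^+$ is the unique minimal generating set of $L^+$, and then read off finiteness from Theorem~\ref{inf_many}.

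\emph{Generation.} By Lemma~\ref{sum}, every element of $L^+$ is a nonnegative integer combination of finitely many elements of $\mathcal{I}$. Hence $\mathcal{I}$ generates $L^+$ as a semigroup.

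\emph{Containment.} Let $G \subseteq L^+$ be any generating set and let $\bx \in \mathcal{I}$. Then $\bx = \sum_j c_j \bg_j$ with $\bg_j \in G$ nonzero and $c_j \in \zed_{>0}$. If this representation had two or more terms, counted with multiplicity, we could split it as $\bx = \bg_{j_0} + (\bx - \bg_{j_0})$. The first summand is nonzero. The second is a nonzero sum of elements of $L^+$, hence lies in $L^+$ and is nonzero, because each nonzero element of $\real^n_{\geq 0}$ has a positive coordinate. This would make $\bx$ decomposable, contradicting $\bx \in \mathcal{I}$. So $\bx = \bg_{j_0} \in G$, and therefore $\mathcal{I} \subseteq G$.

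\emph{Minimality and uniqueness.} By the containment step, $\mathcal{I}$ lies inside every generating set, and by the generation step it is itself one. So it is the unique minimal generating set, i.e.\ the Hilbert basis. For the cardinality, Theorem~\ref{inf_many} says $\mathcal{I}$ is finite (at most $d+n-1$ elements) when $L$ is PVR and infinite otherwise.

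The only point needing care is in the containment step: a sum of nonzero vectors in the positive orthant is never zero. This holds because $\real^n_{\geq 0}$ is a pointed cone. Beyond that, the argument is routine.
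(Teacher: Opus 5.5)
Your proof is correct and follows the paper's route: generation comes from Lemma~\ref{sum}, and the finite/infinite dichotomy comes from Theorem~\ref{inf_many}. The one addition is that you explicitly check that every generating set of $L^+$ contains each indecomposable (using that $\real^n_{\geq 0}$ is pointed); the paper leaves this minimality step implicit when it calls the corollary immediate.
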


We can also obtain a bound on the Euclidean norm of the shortest indecomposable element in $L^+$. For this, let us define the {\it positive successive minima} of $L$ to be
\begin{equation}
\label{s_min}
\lambda^+_j(L) := \min \left\{ r \in \real_{>0} : \dim_{\real} \spn_{\real} \left( \BB_n(r) \cap L^+ \right) \geq j \right\},
\end{equation}
for each $1 \leq j \leq n$, so $0 < \lambda^+_1(L) \leq \dots \leq \lambda^+_n(L)$. We say that linearly independent vectors $\bx_1,\dots,\bx_n \in L^+$ correspond to these positive successive minima if $\|\bx_j\| = \lambda^+_j(L)$. Positive successive minima with respect to the unit cube instead of the unit ball were considered in~\cite{lf_sw}. Since $\|\bx\| \leq \sqrt{n} |\bx|$ for any vector $\bx \in \real^n$, Theorem~1.2 of~\cite{lf_sw} implies, in particular, that
\begin{equation}
\label{s_min_1}
\lambda^+_1(L) \leq \sqrt{n} \left( 2\mu(L) + 1 \right),
\end{equation}
where $\mu(L)$ is the covering radius of $L$ defined in~\eqref{cov_rad}.

\begin{lem} \label{s_min_ind} Let $\bx_1,\dots,\bx_n \in L^+$ be vectors corresponding to the positive successive minima $\lambda^+_1(L),\dots,\lambda^+_n(L)$, respectively. Then they are indecomposable.
\end{lem}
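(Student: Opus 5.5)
The plan is a direct argument by contradiction that uses only the definition~\eqref{s_min} and the fact that all coordinates in $L^+$ are nonnegative. Fix $j$, and suppose $\bx_j = \bwy + \bz$ with $\bwy, \bz \in L^+$ both nonzero.

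\emph{Step 1: both summands are strictly shorter than $\bx_j$.} Since $\bwy,\bz \in \real^n_{\geq 0}$, their inner product is nonnegative. Hence
$$\|\bx_j\|^2 = \|\bwy\|^2 + \|\bz\|^2 + 2\,\bwy\cdot\bz \geq \|\bwy\|^2 + \|\bz\|^2.$$
Because $\bwy$ and $\bz$ are both nonzero, this gives $\|\bwy\|,\|\bz\| < \|\bx_j\| = \lambda^+_j(L)$.

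\emph{Step 2: the short positive vectors span a small subspace.} Let $k \leq j$ be the smallest index with $\lambda^+_k(L) = \lambda^+_j(L)$. Define $W$ as the real span of all vectors of $L^+$ with Euclidean norm strictly less than $\lambda^+_j(L)$.
\begin{itemize}
\item By discreteness of $L$ there are only finitely many such vectors. So there is some $r < \lambda^+_k(L)$ with $\BB_n(r) \cap L^+$ containing all of them.
\item By the minimality in~\eqref{s_min}, $\dim \spn_{\real}(\BB_n(r)\cap L^+) \leq k-1$, and therefore $\dim W \leq k-1$.
\item On the other hand, $\bx_1,\dots,\bx_{k-1}$ have norms $\lambda^+_1(L),\dots,\lambda^+_{k-1}(L)$, all strictly less than $\lambda^+_j(L)$ by the choice of $k$. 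So they lie in $W$, and being linearly independent they force $W = \spn_{\real}\{\bx_1,\dots,\bx_{k-1}\}$.
\end{itemize}

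\emph{Step 3: contradiction.} By Step~1, $\bwy,\bz \in W$, so $\bx_j = \bwy+\bz \in \spn_{\real}\{\bx_1,\dots,\bx_{k-1}\}$. Since $j \geq k$, this contradicts the linear independence of $\bx_1,\dots,\bx_n$. Hence $\bx_j$ is indecomposable.

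The only delicate point is ties among the positive successive minima. Some $\bx_i$ with $i<j$ may have norm exactly $\lambda^+_j(L)$, so one cannot simply assert that every vector shorter than $\bx_j$ lies in $\spn\{\bx_1,\dots,\bx_{j-1}\}$. Passing to the first index $k$ at which the value $\lambda^+_j(L)$ is attained, as in Step~2, handles this.
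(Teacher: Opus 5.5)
Your proof is correct and follows the same route as the paper's: write $\bx_j=\bwy+\bz$, note that both summands are strictly shorter than $\bx_j$, place them in the span of earlier minimal vectors, and contradict linear independence. The paper asserts the strict inequality and the linear dependence of $\bx_1,\dots,\bx_{j-1},\bwy$ without justification. Your nonnegative-inner-product argument and your passage to the first index $k$ with $\lambda_k^+(L)=\lambda_j^+(L)$ supply those missing details, and they also show that the paper's claim $\bwy\in\spn_{\real}\{\bx_1,\dots,\bx_{j-1}\}$ remains true when there are ties.
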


\proof
Suppose some $\bx_j$ is decomposable and let $\bwy,\bz \in L^+$ be nonzero elements so that $\bx_j = \bwy+\bz$. Then $\|\bwy\|, \|\bz\| < \|\bx_j\| = \lambda^+_j(L)$, and so the sets of vectors $\bx_1,\dots,\bx_{j-1},\bwy$ and $\bx_1,\dots,\bx_{j-1},\bz$ are linearly dependent. This means that
$$\bwy = \sum_{i=1}^{j-1} c_{1i} \bx_i,\ \bz = \sum_{i=1}^{j-1} c_{2i} \bx_i,$$
for some real coefficients $c_{11},\dots,c_{1(j-1)}$, not all zero, and $c_{21},\dots,c_{2(j-1)}$, also not all zero. Then
$$\bx_j = \bwy + \bz = \sum_{i=1}^{j-1} (c_{1i}+c_{2i}) \bx_i,$$
meaning that the set of vectors $\bx_1,\dots,\bx_j$ is linearly dependent. This is a contradiction, hence $\bx_j$ must be indecomposable.
\endproof

\noindent
As an immediate consequence of Lemma~\ref{s_min_ind} combined with~\eqref{s_min_1} we have a bound on the shortest indecomposable element in~$L^+$.

\begin{cor} \label{ind_bnd} $\min \left\{ \|\bx\| \in L^+ : \bx \text{ is indecomposable} \right\} \leq \sqrt{n} \left( 2\mu(L) + 1 \right)$.
\end{cor}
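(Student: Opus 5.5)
This is immediate from Lemma~\ref{s_min_ind} combined with inequality~\eqref{s_min_1}; the plan is simply to chain them.

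First, we need the minimum to be well defined. By Lemma~\ref{sum}, or already by Lemma~\ref{prim_basis}, $L^+$ contains nonzero elements. Since $L$ is discrete, only finitely many lattice vectors lie in any ball. Hence the minimum over indecomposables of $\|\bx\|$ is attained, provided indecomposables exist; existence follows from the next step.

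Next, choose vectors $\bx_1,\dots,\bx_n \in L^+$ corresponding to the positive successive minima. They exist because $L^+$ spans $\real^n$: a positive basis is provided by Lemma~\ref{prim_basis}. Moreover, the minima in~\eqref{s_min} are attained by discreteness, so in particular some $\bx_1 \in L^+$ has $\|\bx_1\| = \lambda^+_1(L)$. By Lemma~\ref{s_min_ind}, $\bx_1$ is indecomposable. Therefore the minimum in question is at most $\|\bx_1\| = \lambda^+_1(L)$.

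Finally, apply~\eqref{s_min_1}, which comes from Theorem~1.2 of~\cite{lf_sw} together with $\|\bx\| \leq \sqrt{n}|\bx|$, to get $\lambda^+_1(L) \leq \sqrt{n}(2\mu(L)+1)$, which completes the chain.

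The only point requiring care is the attainment of $\lambda^+_1(L)$ by an actual vector of $L^+$. This follows because $\BB_n(r) \cap L^+$ is finite for each $r$, so the dimension function in~\eqref{s_min} jumps exactly at norms of lattice vectors. No other obstacle is expected.
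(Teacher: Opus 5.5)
Your proposal is correct and is the paper's argument: the paper also takes a vector of $L^+$ attaining $\lambda_1^+(L)$, notes it is indecomposable by Lemma~\ref{s_min_ind}, and bounds $\lambda_1^+(L)$ using~\eqref{s_min_1}. One small correction: Lemma~\ref{sum} does not show $L^+ \neq \{\bo\}$, and Lemma~\ref{prim_basis} already assumes a primitive vector in $L^+$, so cite instead the direct fact that a closed ball of radius $\mu(L)$ lying inside the open positive orthant contains a nonzero point of $L^+$.
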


\bigskip

\section{Irreducible elements in polyhedral cones}
\label{irred}

Let $L \subset \real^n$ be a lattice of full rank, as above, and let $X = \{ \bx_1,\dots,\bx_n \} \subset L$ be a collection of linearly independent points. Define the parallelepiped 
$$P(X) = \left\{ \sum_{i=1}^n a_i \bx_i : 0 \leq a_i < 1\ \forall\ 1 \leq i \leq n \right\}.$$
Let $M(X) = \spn_{\zed} X$, a finite-index sublattice of $L$ and define
$$\det(X) := \det(M(X)) = \Vol_n(P(X)).$$

\begin{lem} \label{empty} The number of points of $L$ in $P(X)$ is equal to the index $[L:M(X)]$, i.e.,
$$|P(X) \cap L| = \frac{\det(X)}{\det(L)}.$$
In particular, the collection $X$ is a basis for $L$ if and only if $P(X) \cap L = \{\bo\}$.
\end{lem}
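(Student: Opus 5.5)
The plan is to show that $P(X) \cap L$ is a complete set of coset representatives for $M(X)$ in $L$. The count $|P(X)\cap L| = [L:M(X)] = \det(X)/\det(L)$ then follows from the standard index formula $\det(M) = [L:M]\det(L)$ for a full-rank sublattice $M \subseteq L$. The point is that $P(X)$ is a fundamental domain for the translation action of $M(X)$ on $\real^n$: every $\bwy \in \real^n$ can be written uniquely as $\bwy = \bm + \bp$ with $\bm \in M(X)$ and $\bp \in P(X)$.

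First I establish the fundamental domain property. Since $X$ is linearly independent and has $n$ elements, it is a basis of $\real^n$, so each $\bwy \in \real^n$ has unique real coordinates $\bwy = \sum c_i \bx_i$. I set $\bm = \sum \lfloor c_i \rfloor \bx_i \in M(X)$ and $\bp = \sum (c_i - \lfloor c_i \rfloor)\bx_i \in P(X)$. For uniqueness, suppose $\bm + \bp = \bm' + \bp'$. Then $\bp - \bp' \in M(X)$ has integer coordinates in the basis $X$, while each of its coordinates lies in $(-1,1)$. Hence all those coordinates are $0$.

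Next I apply this to points of $L$. If $\bwy \in L$, then $\bp = \bwy - \bm \in L$, because $M(X) \subseteq L$. So every coset $\bwy + M(X)$ meets $P(X)\cap L$. By uniqueness, two distinct points of $P(X) \cap L$ cannot lie in the same coset. This gives a bijection between $P(X)\cap L$ and $L/M(X)$, so $|P(X)\cap L| = [L:M(X)]$.

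For the index formula, I take a basis $B$ of $L$ and write the basis $X$ of $M(X)$ as $X = BA$ with $A$ an integer matrix. Then $\det(X) = |\det A|\det(L)$, and $|\det A| = [L:M(X)]$ by the Smith normal form. Finally, $X$ is a basis of $L$ if and only if $M(X) = L$, which happens exactly when the index is $1$. Since $\bo \in P(X)\cap L$ always, this is equivalent to $P(X)\cap L = \{\bo\}$.

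I expect no real obstacle here. The only step needing care is the uniqueness argument in the fundamental domain property, which relies on the half-open intervals $[0,1)$. The index–determinant relation is standard and I will quote it.
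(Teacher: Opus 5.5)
Your proof is correct and takes the same route as the paper. Both show that $P(X)$ is a fundamental domain for $M(X)$, so $P(X)\cap L$ is a full set of coset representatives of $M(X)$ in $L$; the count is then the index $\det(X)/\det(L)$, and $X$ is a basis exactly when this index is $1$. You also supply details the paper treats as standard: the floor-function decomposition, uniqueness via the half-open intervals, and the Smith normal form justification of the index formula.
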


\proof
 The index $[L:M(X)] = \frac{\det(X)}{\det(L)}$ is the number of cosets of $M(X)$ in $L$ and $P(X)$ is a fundamental domain for $M(X)$ in $\real^n$, meaning that $P(X) \cap L$ is the full set of coset representatives of $M(X)$ in $L$. Then $X$ is a basis for $L$ if and only if $L = M(X)$, i.e., if and only if $[L:M(X)] = 1$. In other words, $X$ is a basis if and only if the only coset representative of $M(X)$ in $L$ is~$\bo$.
\endproof

Now let $X$ as above consist only of primitive points in $L$ and consider the cone $C(X)$. A nonzero element $\bwy \in C(X) \cap L$ is called {\it irreducible} if there do not exist nonzero elements $\bz_1,\bz_2 \in C(X) \cap L$ such that $\bwy = \bz_1+\bz_2$. The set of irreducible points forms the Hilbert basis for the semigroup $C(X) \cap L$.

\begin{lem} \label{irr} Let $n = 2$ and let $X = \{\bx_1,\bx_2\} \subset L^+$. If a primitive point $\bwy \in C(X) \cap L$ is irreducible, then $\bwy \in P(X) \cup X$.
\end{lem}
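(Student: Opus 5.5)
Write $\bwy = a_1 \bx_1 + a_2 \bx_2$ with $a_1, a_2 \geq 0$ real; this is possible since $\bwy \in C(X)$. The plan is a reduction argument. If $\bwy \notin P(X) \cup X$, we subtract one of the generators and stay inside $C(X) \cap L$, which produces a nontrivial decomposition of $\bwy$ in the semigroup $C(X) \cap L$ and contradicts irreducibility.

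So suppose $\bwy \notin P(X)$. Then some coefficient is at least $1$; say $a_1 \geq 1$. Set $\bz_1 = \bx_1$ and $\bz_2 = \bwy - \bx_1 = (a_1 - 1)\bx_1 + a_2 \bx_2$. We check the following.
\begin{itemize}
\item $\bz_2 \in L$, because $\bwy, \bx_1 \in L$.
\item $\bz_2 \in C(X)$, because $a_1 - 1 \geq 0$ and $a_2 \geq 0$.
\item $\bz_1 = \bx_1$ is nonzero and lies in $C(X) \cap L$.
\end{itemize}
Irreducibility of $\bwy$ then forces $\bz_2 = \bo$, that is, $\bwy = \bx_1 \in X$. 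The case $a_2 \geq 1$ is symmetric and gives $\bwy = \bx_2$. Hence $\bwy \in P(X) \cup X$.

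No step here is a real obstacle; the only care needed is in how $P(X)$ is defined. It uses half-open coefficients $0 \leq a_i < 1$, so its complement in $C(X)$ is exactly the set where some $a_i \geq 1$, which is the case we handled. Linear independence of $\bx_1, \bx_2$ is what guarantees the real coordinates $a_1, a_2$ exist and are unique. The hypotheses that $\bwy$ is primitive and that $X \subset L^+$ are not actually needed. Primitivity could serve as an alternative route to exclude multiples of $\bx_i$: a point $\bwy = a_1 \bx_1$ with $a_1 > 1$ cannot be primitive, since $\bx_1$ itself lies in $L$. The subtraction argument already rules these out, however, and it also works verbatim in any dimension $n$.
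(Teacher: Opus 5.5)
Your argument is correct, and it takes a genuinely different and simpler route than the paper.

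The paper's proof goes through a subdivision of the cone:
\begin{enumerate}
\item It lists the primitive lattice points $\bwy_1,\dots,\bwy_m$ of $P(X)$ in order of angle from $\bx_1$.
\item It splits $C(X)$ into the sub-cones $C(\bx_1,\bwy_1), C(\bwy_1,\bwy_2),\dots,C(\bwy_m,\bx_2)$.
\item It argues via Lemma~\ref{empty} that each consecutive pair is a basis of $L$.
\item It uses Lemma~\ref{cone_sem} to conclude that every lattice point inside a sub-cone is a nonnegative integer combination of the two generators, hence reducible unless it is one of them.
\end{enumerate}

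Your subtraction $\bwy = \bx_i + (\bwy - \bx_i)$, applied whenever some coefficient $a_i \geq 1$, bypasses all of this. It needs no ordering by angle and no unimodularity. As you observe, it also needs neither primitivity of $\bwy$ nor $X \subset L^+$, and it works verbatim in every dimension.

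What the paper's route buys in exchange is extra structure specific to $n=2$: a unimodular subdivision of $C(X)$. Consequently every point of $C(X)\cap L$ is a nonnegative integer combination of at most two angularly adjacent points of $(P(X)\cap L)\cup X$. This is a discrete Carath\'eodory-type statement in the spirit of Theorem~\ref{ind_sum}.

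The price is that unimodularity of the sub-cones is the delicate step. The paper reduces a lattice point of a sub-parallelogram modulo $M(X)$, but that reduction does not obviously land back in the same sub-parallelogram. Your proof has no step of this kind.
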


\proof
We need to prove that there are no irreducible points in $C(X) \cap L$ outside of $X \cup P(X)$. Let $X = \{\bx_1,\bx_2\}$ and $\bwy_1,\dots,\bwy_m$ be primitive points in $P(X) \cap L$, ordered in the way that
$$\aa(\bx_1,\bwy_1) < \aa(\bx_1,\bwy_2) < \dots < \aa(\bx_1,\bwy_m) < \aa(\bx_1,\bx_2),$$
where $\aa(\bx,\bwy)$ stands for the angle between these two vectors. For each pair of vectors $\bx, \bwy$ we write
$$C(\bx,\bwy) = \left\{ a \bx + b \bwy : a,b \geq 0 \right\},\ P(\bx,\bwy) = \left\{ a \bx + b \bwy : 0 \leq a,b < 1 \right\}$$
for the cone and the parallelogram spanned by them. Then
\begin{equation}
\label{cone_union}
C(\bx_1,\bx_2) = C(\bx_1,\bwy_1) \cup C(\bwy_1,\bwy_2) \cup \dots \cup C(\bwy_{m-1},\bwy_m) \cup C(\bwy_m,\bx_2).
\end{equation}
Now, let $P$ be one of the parallelograms $P(\bx_1,\bwy_1), P(\bwy_1,\bwy_2), \dots, P(\bwy_m,\bx_2)$ and suppose that $\bz \in P \cap L$ is not equal to $\bo$. Notice that $P \cap P(\bx_1,\bx_2)$ cannot contain any nonzero point, since this would mean that there is another primitive point in $P(\bx_1,\bx_2)$ which is not among $\bwy_1,\dots,\bwy_m$. Suppose that there is $\bz \in P \cap L$ outside of $P(\bx_1,\bx_2)$. This point must then belong to some coset $\bz' + M(X)$ of $M(X)$ in $L$, where $\bz' \in (P \cap P(\bx_1,\bx_2) \cap L)$ -- again, a contradiction. Hence, $P \cap $L contains no nonzero points. Then Lemma~\ref{empty} implies that the pairs of vectors $\{ \bx_1,\bwy_1 \}, \{ \bwy_1,\bwy_2 \}, \dots, \{ \bwy_m,\bx_2 \}$ are positive bases for $L$. By Lemma~\eqref{cone_sem}, every lattice point in a positive basis cone is representable as a nonnegative integer linear combination of these basis vectors; thus, none of the cones in the decomposition~\eqref{cone_union} can contain irreducible elements in their interior. Hence, the only irreducibles in $C(X)$ are primitive lattice points in $P(X) \cup X$.
\endproof

\begin{rem} Notice that not all primitive points in $P(X)$ are necessarily irreducible. Indeed, it is possible that, say, some $\bwy_i = \bwy_{i-1}+\bwy_{i+1}$.
\end{rem}

Let $r > 0$ and $\BB_2(r) \subset \real^2$ be a ball of radius $r$ centered at $\bo$. Let $\bx_1, \bx_2 \in \BB_2(r) \cap L^+$ be such that the cones $C(\be_1,\bx_1)$ and $C(\be_2,\bx_2)$ contain no points of $L$ of norm less than~$r$. Notice that this condition ensures that $\bx_1, \bx_2$ are indecomposable elements in $L$. Further, all indecomposables in $ \BB_2(r) \cap L^+$ must be irreducibles in $C(\bx_1,\bx_2)$, and hence, are primitive points contained in the parallelogram $P(\bx_1,\bx_2)$. This implies that the number of indecomposables of norm $\leq r$ is no larger than the number of primitive points in $P(\bx_1,\bx_2)$. We investigate the two-dimensional case more in depth in the next section.

\bigskip

\section{Indecomposables in planar lattices}
\label{ind_plane}

The goal of this section is to prove Theorem~\ref{ind_sum}. For this, we construct a cover of positive lattice points by indecomposable basis cones. Throughout this section, let $L \subset \real^2$ be a lattice of rank $2$ with positive successive minima $\lambda_1^+, \lambda_2^+$. Let $\bx,\bwy \in L^+$ be vectors corresponding to these positive successive minima.

\begin{lem} \label{sm_pb_2d} The vectors $\bx,\bwy$ form a basis for $L$.
\end{lem}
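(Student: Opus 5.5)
By Lemma~\ref{empty}, it suffices to show that the half-open parallelogram $P(\bx,\bwy)$ contains no point of $L$ other than $\bo$. Suppose instead that $\bo \neq \bz = a\bx + b\bwy \in L$ with $0 \leq a,b < 1$.

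The first step is to see that $\bz \in L^+$. Since $\bx,\bwy \in \real^2_{\geq 0}$ and $a,b \geq 0$, the point $\bz$ is a nonnegative combination of nonnegative vectors, hence nonnegative. Moreover $a$ and $b$ cannot both vanish, since $\bz \neq \bo$.

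The second step is a case split on the coefficients. If $b = 0$, then $\bz = a\bx$ with $0<a<1$, so $\|\bz\| < \|\bx\| = \lambda_1^+$. This contradicts the definition of $\lambda_1^+$, because $\bz$ is a nonzero point of $L^+$. The case $a=0$ is handled the same way: $\|\bz\| < \|\bwy\| = \lambda_2^+$, and $\bz$ is linearly independent of $\bx$. Indeed, if $\bz$ were parallel to $\bx$, then $\bwy$ would be parallel to $\bx$, which is impossible. So we have a nonzero positive lattice point that is shorter than $\lambda_2^+$ and independent of $\bx$, contradicting the definition of $\lambda_2^+$.

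The main case is $a,b>0$, and it needs a length estimate. Here $\bz$ is not parallel to $\bx$, so minimality of $\lambda_2^+$ forces $\|\bz\| \geq \lambda_2^+ = \|\bwy\|$. To get a contradiction I would use the complementary point $\bz' = \bx + \bwy - \bz = (1-a)\bx + (1-b)\bwy$. It also lies in $L^+$, with coefficients in $(0,1]$, and it is not parallel to $\bx$. By the triangle inequality,
$$\|\bz\| + \|\bz'\| \leq \ldots$$
but a direct bound of this kind seems weak. A better route is to look at $\bz - \bx$ or $\bz - \bwy$, which are lattice points but may fail to be positive. The cleaner idea is to use the geometry of the quarter plane: both $\bx$ and $\bwy$ lie in $\real^2_{\geq 0}$, so the angle between them is at most $\pi/2$, and hence $\langle \bx,\bwy\rangle \geq 0$. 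Then
$$\|\bz\|^2 = a^2\|\bx\|^2 + 2ab\langle\bx,\bwy\rangle + b^2\|\bwy\|^2 .$$
This alone does not beat $\|\bwy\|^2$, so the plan is to pick whichever of $\bz$ and $\bz'$ is shorter. Writing $\bz$ and $\bz'$ with $a+a' = 1$ and $b+b' = 1$, the inequality
$$\min(\|\bz\|^2,\|\bz'\|^2) \leq \tfrac12\left(\|\bz\|^2+\|\bz'\|^2\right)$$
together with the convexity estimate $a^2+(1-a)^2 \leq 1$ gives
$$\tfrac12\left(\|\bz\|^2+\|\bz'\|^2\right) \leq \tfrac12\|\bx\|^2 + \tfrac12\|\bwy\|^2 + \langle\bx,\bwy\rangle\bigl(ab+(1-a)(1-b)\bigr).$$
The inner product term is the obstacle. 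I expect this case to be the hard step, and I would first try to handle it differently: replace the averaging by an argument with $\bz-\bx$ or $\bz - \bwy$ when one of them is positive, and otherwise use the acute angle to show that $\|\bz\|<\|\bwy\|$ directly. Any of these outcomes contradicts the minimality of $\lambda_2^+$.
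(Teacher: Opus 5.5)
Your reduction via Lemma~\ref{empty} and your treatment of the boundary cases $a=0$ and $b=0$ are fine. The main case $0<a,b<1$, however, is not proved: the proposal ends with a list of candidate strategies, and the one you finally settle on cannot work. Showing $\|\bz\|<\|\bwy\|$ ``directly'' is false in general, and falling back on $\bz-\bx$ or $\bz-\bwy$ does not rescue it. For $\bx=\be_1$, $\bwy=\be_2$, $a=b=0.9$ one has $\|\bz\|=0.9\sqrt2>\|\bwy\|$, while neither $\bz-\bx$ nor $\bz-\bwy$ lies in $\real^2_{\geq 0}$. So any argument must at some point use the complementary point $\bz'=\bx+\bwy-\bz$.

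Your averaging of squares does use $\bz'$, but it fails for a genuine reason, not just because your estimate is lossy. Take nearly parallel $\bx,\bwy$ of equal length at angle $\theta$, with $a=b=0.9$. Then $\tfrac12(\|\bz\|^2+\|\bz'\|^2)=0.82(1+\cos\theta)\|\bwy\|^2$, which exceeds $\|\bwy\|^2$. The nonnegativity of $\langle\bx,\bwy\rangle$ actually works against you here. The only fact you need is that $\bx,\bwy$ are not parallel.

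The missing idea is the triangle inequality you dismissed, applied to norms rather than squared norms. Since $\bx,\bwy$ are linearly independent and $a,b,1-a,1-b>0$, the triangle inequality is strict for both $\bz=a\bx+b\bwy$ and $\bz'=(1-a)\bx+(1-b)\bwy$. Hence
\[ \|\bz\|+\|\bz'\| < \|\bx\|+\|\bwy\| \leq 2\lambda_2^+ . \]
Both $\bz$ and $\bz'$ lie in $L^+$ and are independent of $\bx$, because their $\bwy$-coefficients $b$ and $1-b$ are nonzero. So the shorter of the two contradicts the definition of $\lambda_2^+$, or that of $\lambda_1^+$ if its norm is even below $\lambda_1^+$.

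Equivalently, swap $\bz$ and $\bz'$ if needed so that $a+b\leq 1$. Then $\|\bz\|<a\|\bx\|+b\|\bwy\|\leq(a+b)\lambda_2^+\leq\lambda_2^+$.

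This is the same mechanism as the paper's proof. The paper argues that since $\bz$ lies outside the open disc of radius $\lambda_2^+$, it sits in the triangle with vertex $\bx+\bwy$ cut off by that circle, so $\|\bx+\bwy-\bz\|<\lambda_2^+$. The triangle-inequality version is shorter and avoids the picture.
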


\proof
Let
$$P = \left\{ a \bx + b \bwy : 0 \leq a,b < 1 \right\}$$
be the parallelogram spanned by $\bx, \bwy$. As before, write $\BB_n(\lambda_2^+)$ for the closed ball of radius $\lambda_2^+$ centered at $\bo$ and let $\BB^o_n(\lambda_2^+)$ be its interior, the open ball. Notice that  $\BB^o_n(\lambda_2^+) \cap P$ cannot contain any point of $L$ linearly independent with $\bx$: this would contradict the definition of $\lambda_2^+$. Suppose $P$ contains a point of $L$, call it $\bz$, then $\bz \in P \setminus \BB^o_n(\lambda_2^+)$ as in Figure~\ref{PP}. Let $ABC$ be the triangle with the vertex $A$ at $\bx+\bwy$ and vertices $B,C$ at the intersection points of $\bar{P}$, the closure of $P$, and $\BB_n(\lambda_2^+)$, as in Figure~\ref{PP}. Then the point $D$ at $\bz$ is closer to the vertex $A$ than at least one of $B$ and $C$, since the furthest point from a vertex in a triangle is another vertex. The distance from $A$ to $D$ is $\|\bx+\bwy-\bz\|$, hence
$$\|\bx+\bwy-\bz\| < \max \{ AB, AC \} \leq \|\bwy\| = \lambda_2^+.$$
Since $\bx+\bwy-\bz \in L^+$ and it is linearly independent with $\bx$, this contradicts the definition of $\lambda_2^+$. This means that $P \cap L = \{ \bo \}$. Hence, by Lemma~\ref{empty}, $\bx$ and $\bwy$ form a positive basis for $L$.

\begin{figure}
\centering
\includegraphics[scale=0.5]{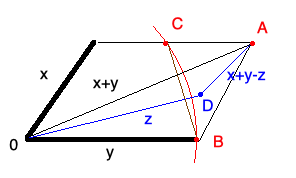}
\caption{Proof of Lemma~\ref{sm_pb_2d}: Parallelogram $P$ spanned by $\bx, \bwy$ with a vector $\bz$ in it. The red arc is the boundary of $\BB_n(\lambda_2^+)$ intersecting $P$.}\label{PP}
\end{figure}

\endproof

\begin{cor} \label{sm_ind_2d} With notation as above, the cone $C(\bx,\bwy)$ contains no indecomposable elements of $L^+$ except for $\bx$ and $\bwy$. In particular, every point $\bz \in C(\bx,\bwy) \cap L^+$ can be expressed as $\bz = a \bx+ b \bwy$ for some $a,b \in \zed_{\geq 0}$.
\end{cor}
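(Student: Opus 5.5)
By Lemma~\ref{sm_pb_2d}, the vectors $\bx,\bwy$ form a basis for $L$. Both lie in $L^+$, so $X=\{\bx,\bwy\}$ is a positive basis. Lemma~\ref{cone_sem} then gives
$$L^+ \cap C(X) = S(X),$$
and since $C(\bx,\bwy)\subseteq \real^2_{\geq 0}$, we have $C(\bx,\bwy)\cap L^+ = C(\bx,\bwy)\cap L$. This is the second assertion: every $\bz \in C(\bx,\bwy)\cap L^+$ can be written as $\bz = a\bx + b\bwy$ with $a,b\in\zed_{\geq 0}$.

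For the first assertion, take an indecomposable $\bz \in C(\bx,\bwy)\cap L^+$ and write $\bz = a\bx+b\bwy$ with $a,b\in\zed_{\geq 0}$, not both zero since $\bz\neq\bo$. Suppose $a+b\geq 2$. If $a\geq 1$, then $\bz = \bx + \bigl((a-1)\bx+b\bwy\bigr)$, and the second summand is a nonzero element of $S(X)\subseteq L^+$. If $a=0$, then $b\geq 2$ and $\bz = \bwy + (b-1)\bwy$ splits in the same way. Either way $\bz$ would be decomposable, which is a contradiction. Hence $a+b=1$, so $\bz\in\{\bx,\bwy\}$.

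It remains to note that $\bx$ and $\bwy$ are themselves indecomposable, which is Lemma~\ref{s_min_ind}. This completes the argument. The only nontrivial input is the basis property from Lemma~\ref{sm_pb_2d}, which has already been established, so no step here presents a real obstacle; one only has to check that elements of $S(X)$ are indeed in $L^+$, which holds because $\bx,\bwy\in\real^2_{\geq 0}$.
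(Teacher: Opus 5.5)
Your proof is correct and takes the same route as the paper. The paper says only that the corollary follows by combining Lemma~\ref{sm_pb_2d} (that $\bx,\bwy$ form a basis) with Lemma~\ref{cone_sem}; you supply the routine details it leaves implicit, namely the splitting argument when $a+b\geq 2$ and the indecomposability of $\bx,\bwy$ via Lemma~\ref{s_min_ind}.
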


\proof
The statement follows immediately upon combining Lemmas~\ref{sm_pb_2d} and~\ref{cone_sem}.
\endproof
\smallskip

Let us write $\bx = (x_1,x_2)$ and $\bwy = (y_1,y_2)$, and assume without loss of generality that $x_1 > y_1$. Then we must have $x_2 \leq y_2$, since otherwise $\bx$ would be decomposable by Lemma~\ref{less}. Define the regions
\begin{equation}
\label{R1R2}
\R_1 = \left\{ \bz \in \real^2 : z_1 > x_1, z_2 < x_2 \right\},\ \R_2 = \left\{ \bz \in \real^2 : z_1 < y_1, z_2 > y_2 \right\}.
\end{equation}
Then all the indecomposables are contained in these two regions (see Figure~\ref{ind_1}). Let us say that indecomposables $\bu$ and $\bv$ are {\it consecutive} if the cone $C(\bu,\bv)$ contains no other indecomposables; for instance, $\bu$ and $\bv$ in $\R_2$ in Figure~\ref{ind_1} are consecutive. 

\begin{figure}
\centering
\includegraphics[scale=0.4]{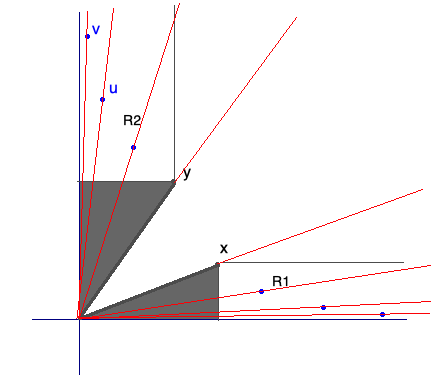}
\caption{Distribution of indecomposables: $\bx$, $\bwy$ and the blue dots. Red lines indicate consecutive indecomposable cones, the union of all of which covers $\real^2_{\geq 0}$.}\label{ind_1}
\end{figure}

\begin{lem} \label{ind_basis} Let $\bu$ and $\bv$ be consecutive indecomposables. Then they form a basis for $L$.
\end{lem}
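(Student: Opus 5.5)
By Lemma~\ref{empty}, it suffices to show that the half-open parallelogram $P(\bu,\bv)$ contains no nonzero point of $L$. Order the two vectors so that $u_1 > v_1$. Then $u_2 < v_2$: otherwise $\bv \leq \bu$, and $\bu$ would be decomposable by Lemma~\ref{less}. The cone $C(\bu,\bv)$ is therefore a subcone of $\real^2_{\geq 0}$ and contains no indecomposables other than $\bu$ and $\bv$, since they are consecutive.

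Argue by contradiction and suppose $\bo \neq \bz \in P(\bu,\bv) \cap L$. Write $\bz = a\bu + b\bv$ with $0 \leq a,b < 1$. Neither coefficient can be zero, since then $\bz$ would be a fractional multiple of a primitive vector; so $\bz$ lies in the interior of $C(\bu,\bv) \subset \real^2_{\geq 0}$, and hence $\bz \in L^+$. By Lemma~\ref{sum}, $\bz$ is a positive integer combination of indecomposables. The key geometric point is that at least one indecomposable $\bw$ in that representation must satisfy $\bw \leq \bz$. Indeed, any single summand already satisfies $\bw \leq \bz$, because the other summands lie in $L^+$ and contribute nonnegatively to every coordinate.

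Fix such a $\bw$. It is an indecomposable with $\bw \leq \bz < \bu + \bv$ coordinatewise in the interior case. Since $\bu,\bv$ are consecutive, $\bw$ cannot lie in the interior of $C(\bu,\bv)$, so $\bw$ lies in $C(\be_1,\bu)$ or $C(\bv,\be_2)$, or $\bw \in \{\bu,\bv\}$.

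First suppose $\bw = \bu$. Then $\bz - \bu = (a-1)\bu + b\bv$ has negative $\bu$-coefficient, yet $\bz - \bu \in L^+$ because $\bw \leq \bz$. The point $\bz-\bu$ lies in $C(\bv,\be_2)$ and satisfies $\bz - \bu \leq \bz < \bv + \bu$. I expect to iterate this and reduce to the next case.

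Now suppose $\bw \in C(\bv,\be_2)$ and $\bw \neq \bv$. Then $w_1/w_2 < v_1/v_2$, and the comparison is the main step. Writing $\bw = s\bu + t\bv$ with $s < 0 < t$, the condition $\bw \leq \bz$ gives, in the first coordinate, $su_1 + tv_1 \leq au_1 + bv_1$. I want to combine this with the indecomposability of $\bv$. The difference $\bv - \bw$ has $\bu$-coefficient $-s > 0$ and $\bv$-coefficient $1-t$. If $t \leq 1$, then $\bv - \bw \in C(\bu,\bv) \subset \real^2_{\geq 0}$ lies in $L$, so $\bw \leq \bv$, contradicting the indecomposability of $\bv$ via Lemma~\ref{less}. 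If $t > 1$, then $\bw - \bv = s\bu + (t-1)\bv$ has the same sign pattern as $\bw$, and the coordinatewise bound $\bw \leq \bz$ together with $b < 1$ is meant to force a contradiction. The difficulty is that, since $s < 0$, the first coordinate of $\bw$ is small, so I need to check which coordinate breaks.

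This case analysis is the main obstacle. A cleaner alternative I would try first is a two-dimensional area and ordering argument. Among all nonzero lattice points of $P(\bu,\bv)$, choose $\bz$ minimal for $\leq$ within $L^+$, for example by minimal first coordinate among those with minimal second coordinate. Then show directly that $\bz$ is indecomposable. Any $\bw \leq \bz$ in $L^+$ reduced modulo $M(\bu,\bv)$ lands in $\overline{P(\bu,\bv)}$, and for $\bw$ outside the cone, $\bz - \bw$ or $\bw$ itself, translated by $\bu$ or $\bv$, gives a smaller point of $P(\bu,\bv)$ or a point below $\bu$ or $\bv$. This yields an indecomposable strictly inside $C(\bu,\bv)$, contradicting consecutiveness.
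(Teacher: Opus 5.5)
Your setup is sound. With $u_1>v_1$ and $u_2<v_2$, a nonzero $\bz=a\bu+b\bv\in P(\bu,\bv)\cap L$ has $0<a,b<1$ and lies in the open cone and in $L^+$. Some indecomposable $\bw\leq\bz$ exists and cannot lie in the open cone, and your subcase $t\leq 1$ is correct.

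The argument stops exactly where the work is, though. The case $\bw=\bu$ ends with ``I expect to iterate'', and the subcase $t>1$ ends with ``is meant to force a contradiction''. The alternative via a $\leq$-minimal $\bz$ does not get around this. Minimality buys nothing: an indecomposable $\bw\leq\bz$ in the open cone already contradicts consecutiveness, so only the same outer cases remain. Your claim that reducing $\bw$ or $\bz-\bw$ modulo $M(\bu,\bv)$ gives a point of $P(\bu,\bv)$ below $\bz$ is also unjustified, since the reduction may add copies of $\bu$ or $\bv$ and need not preserve $\leq$. So the proposal does not yet prove the lemma.

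The missing idea is the complementary point $\bu+\bv-\bz=(1-a)\bu+(1-b)\bv$. It also lies in the open cone, hence in $L^+\setminus\{\bo\}$.

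\begin{itemize}
\item \emph{It rules out $\bz\geq\bu$.} Otherwise $\bv=(\bz-\bu)+(\bu+\bv-\bz)$ would be decomposable. This is just your $t\leq 1$ argument applied to $\bz-\bu=(a-1)\bu+b\bv$ instead of to $\bw$, and it closes the case $\bw=\bu$ immediately. Symmetrically, $\bz\geq\bv$ is impossible.
\item \emph{Outer cone $C(\bv,\be_2)$.} For $\bw$ strictly steeper than $\bv$, the useful dichotomy is $w_2\leq v_2$ versus $w_2>v_2$, not $t\leq 1$ versus $t>1$.
  \begin{itemize}
  \item If $w_2\leq v_2$, the slope inequality $w_2v_1>w_1v_2$ forces $w_1<v_1$. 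Then $\bw\leq\bv$, and $\bv$ decomposes.
  \item If $w_2>v_2$, then $z_2\geq w_2>v_2$. Since $\bz\in C(\bu,\bv)$, this forces $z_1>v_1$, so $\bz\geq\bv$, which the complement rules out.
  \end{itemize}
\item \emph{Outer cone $C(\be_1,\bu)$.} This side is symmetric.
\end{itemize}

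This is essentially the paper's proof. Points of $P(\bu,\bv)$ with $z_2>v_2$ satisfy $\bv<\bz<\bu+\bv$, giving $\bu=(\bz-\bv)+(\bu+\bv-\bz)$. For $z_2\leq v_2$, indecomposability of $\bu$ and $\bv$ traps an indecomposable $\bt\leq\bz$ in the box $v_1<t_1<u_1$, $u_2<t_2<v_2$, i.e.\ strictly between $\bu$ and $\bv$, contradicting consecutiveness.
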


\proof
Our proof is illustrated by Figure~\ref{ind_2}. If the consecutive indecomposables are $\bx,\bwy$, then the result follows from Lemma~\ref{sm_pb_2d}. Otherwise, without loss of generality, assume that $\bu,\bv \in \R_2$, $u_2 < v_2$, and let $P$ be the parallelogram spanned by $\bu$ and $\bv$ (the argument is completely analogous if $\bu,\bv, \in \R_1$). We will prove that $P$ contains no point of the lattice $L$ in its interior. Suppose it does, call this point $\bz$. Then $\bz$ is decomposable, since otherwise $\bu$ and $\bv$ would not be consecutive. Define the region
$$R = \left\{ \bw \in P : w_2 \leq v_2 \right\},$$
and suppose that $\bz \in R$. Notice that we must have $z_2 > u_2$, since otherwise we would have $\bz < \bu$ contradicting the fact that $\bu$ is indecomposable (Lemma~\ref{less}). Then we have $\bz \in P$ with
$$z_1 < u_1,\ u_2 < z_2 \leq v_2.$$
Since $\bz$ is decomposable, it must be a positive integer linear combination of some finite collection of indecomposables (Lemma~\ref{no_fin}). In particular, there must exist some indecomposable element $\bt$ such that $\bt < \bz$. If $t_2 < u_2$, then $\bt < \bu$, again contradicting the fact that $\bu$ is indecomposable (Lemma~\ref{less}); by the same reasoning, we must have $t_1 > v_1$, since otherwise $\bt < \bv$, contradicting the fact that $\bv$ is indecomposable. But then we must have 
$$v_1 < t_1 < z_1 < u_1,\ u_2 < t_2 < z_2 < v_2,$$
and so $\bt \in P$, which contradicts the fact that $\bu$ and $\bv$ are consecutive indecomposables. Hence, $\bz \notin R$, so suppose $\bz \in P \setminus R$. Then we have
$$\bv < \bz < \bv+\bu,$$
and so $\bz-\bv, \bv+\bu-\bz \in L^+$. On the other hand, $\bu = (\bz-\bv) + (\bv+\bu-\bz)$, which contradicts the fact that $\bu$ is indecomposable. Thus, we proved that $P$ contains no point of the lattice $L$ in its interior, and so $\bu,\bv$ is a basis for $L$ by Lemma~\ref{empty}.
\endproof

\begin{figure}
\centering
\includegraphics[scale=0.4]{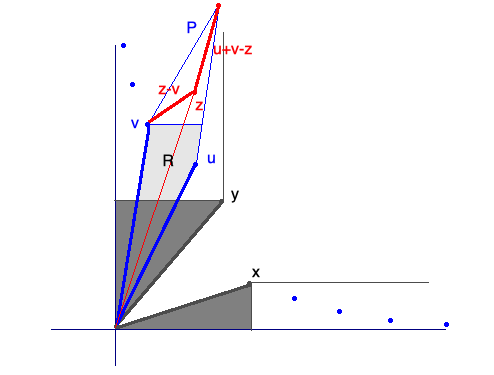}
\caption{Proof of Lemma~\ref{ind_basis}.}\label{ind_2}
\end{figure}

Next, we want to prove that $\real^2_{\geq 0}$ is covered by the union of indecomposable cones. Without loss of generality, let us focus on consecutive indecomposables in the region $\R_2$, as defined in~\eqref{R1R2} and illustrated in Figure~\ref{ind_1} (the situation with those in $\R_1$ is completely analogous). Let us order indecomposables in $\R_2$ as $\bwy = \bu_1, \bu_2, \bu_3, \dots$ in the order of increasing sup-norm, i.e., $|\bu_i| < |\bu_{i+1}|$ and $\bu_i,\bu_{i+1}$ is a consecutive pair of indecomposables for every $i \geq 1$.

\begin{lem} \label{cone_cover} Define $\C(\R_2) = \{ a \bt : a \geq 0, \bt \in \R_2 \}$. Then
$$\C(\R_2) = \bigcup_{i=1}^{\infty} C(\bu_i,\bu_{i+1}).$$
\end{lem}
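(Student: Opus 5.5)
The plan is to prove the two inclusions separately, using the ordering $\bwy = \bu_1, \bu_2, \dots$ of indecomposables in $\R_2$ by increasing sup-norm. Since each $\bu_i \in \R_2$, each cone $C(\bu_i,\bu_{i+1})$ is contained in $\C(\R_2)$. So the content of the lemma is the reverse inclusion: every ray $\{a\bt : a \geq 0\}$ with $\bt \in \R_2$ lies in some $C(\bu_i,\bu_{i+1})$. First I will note that $\C(\R_2)$ is the cone of directions strictly steeper than $\bwy$ and not equal to the $\be_2$ direction. Precisely, it is the set of $\bw \geq \bo$ with $w_2 y_1 > w_1 y_2$, together with $\bo$, excluding the positive $z_2$-axis when $y_1 > 0$. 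I also need the indecomposables in $\R_2$ to be infinitely many whenever $\C(\R_2)$ is not covered by finitely many cones. If $L$ is PVR, the list is finite and ends at a vector on the $z_2$-axis; then the union is finite and the argument below still works with the last cone reaching the boundary.

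Next I establish the monotonicity of slopes. By Lemma~\ref{less}, two indecomposables cannot be comparable. Hence, for indecomposables in $\R_2$, increasing $z_2$ forces decreasing $z_1$. Ordering by sup-norm (which for points of $\R_2$ that are steep enough is just $z_2$; in general I would order by $z_2$, noting $z_2>y_2$ dominates) therefore orders them by strictly increasing angle with $\be_1$. The cones $C(\bu_i,\bu_{i+1})$ are thus adjacent and non-overlapping, and their union is the cone between $\bu_1 = \bwy$ and $\lim_i$ of the direction of $\bu_i$. Consecutiveness of $\bu_i,\bu_{i+1}$ follows from this angular ordering.

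The main step is to show that the angles of $\bu_i$ converge to $\pi/2$, i.e. that $u_{i,1}/u_{i,2} \to 0$. I will argue by contradiction. Suppose all $\bu_i$ lie in a closed cone $C(\bwy,\bw)$ with $\bw$ a direction strictly below the $z_2$-axis. Take the region $\C(\R_2)\setminus C(\bwy,\bw)$, which contains balls of arbitrarily large radius, in particular one of radius exceeding the covering radius $\mu(L)$. That ball then contains a lattice point $\bz \in L^+$. By Lemma~\ref{sum}, $\bz$ is a positive integer combination of indecomposables. All indecomposables lie in $\R_1 \cup \R_2 \cup \{\bx,\bwy\}$, and those in $\R_1\cup\{\bx\}$ lie below the direction of $\bwy$. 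So all indecomposables lie in the cone $C(\be_1, \bw)$, and any nonnegative combination also lies there, contradicting $\bz \notin C(\be_1,\bw)$. This is the same device as in the proof of Theorem~\ref{inf_many}.

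Combining the steps, every direction in $\C(\R_2)$ lies between $\bu_1$ and some $\bu_j$, hence in some $C(\bu_i,\bu_{i+1})$, which gives the reverse inclusion. I expect the delicate points to be making the sup-norm ordering agree with the angular ordering, and handling the boundary ray along $\be_2$ in the PVR case. If sup-norm ordering is problematic for points with $z_1 > z_2$, I will simply reorder by $z_2$, which is what the consecutiveness claim actually needs.
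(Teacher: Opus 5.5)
Your argument is correct in substance. It shares the paper's skeleton but proves the key step by a different route. In both proofs, pairwise incomparability of indecomposables (Lemma~\ref{less}) forces the first coordinates of the $\bu_i$ to decrease as the second coordinates increase. The lemma then reduces to showing that the directions of the $\bu_i$ accumulate at $\be_2$.

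The paper proves this accumulation coordinatewise. If $u_{i1}>\eps$ for all $i$, the strip $\{\ba\in\real^2_{\geq 0}: a_1\leq\eps\}$ would contain no nonzero lattice point, which the paper contradicts via Kronecker's theorem; then $u_{i2}\to\infty$ by discreteness. You instead reuse the covering-radius device from the proof of Theorem~\ref{inf_many}. If all $\bu_i$ stayed in $C(\bwy,\bw)$ with $\bw$ below vertical, every indecomposable, and hence by Lemma~\ref{sum} every point of $L^+$, would lie in the convex cone $C(\be_1,\bw)$. A lattice point in a large ball inside the sector above $\bw$ refutes this.

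What your route buys:
\begin{itemize}
\item It needs no Diophantine input.
\item It treats finite and infinite lists uniformly, and in fact proves the paper's unproved assertion that a finite list ends on the $z_2$-axis. The correct dichotomy there is whether $L$ meets the positive $z_2$-axis, not whether $L$ is PVR.
\item It sidesteps a soft spot in the paper's strip argument. Density of the values $\alpha x_1+\beta y_1$ does not by itself produce a lattice point with nonnegative second coordinate, so an extra step is needed there.
\end{itemize}
The paper's route, in exchange, gives the sharper information $u_{i1}\to 0$ rather than only slopes tending to infinity. Your choice to order the $\bu_i$ by second coordinate is also the right one. Ordering by sup-norm, as the paper does, can disagree with the angular order when $y_1>y_2$.

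One correction: $\C(\R_2)$ does not exclude the positive $z_2$-axis. When $y_1>0$ that ray lies in $\C(\R_2)$, and when there are infinitely many $\bu_i$ it lies in none of the cones $C(\bu_i,\bu_{i+1})$. Likewise, the ray through $\bwy$ lies in $C(\bu_1,\bu_2)$ but not in $\C(\R_2)$, and $\R_2$ as defined reaches into $z_1<0$. So the displayed equality holds only after intersecting with $\real^2_{\geq 0}$ and adjusting these boundary rays.

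What you and the paper actually prove is that the cones cover every direction from $\bwy$ up to $\be_2$. The $\be_2$-ray itself is covered exactly when $L$ meets the positive $z_2$-axis. That is all Theorem~\ref{ind_sum} needs, since otherwise that ray carries no lattice point.
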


\proof
Suppose that $\R_2$ contains only finitely many indecomposables $\bu_1,\dots,\bu_m$. This means that the last one $\bu_m$ is a scalar multiple of the standard basis vector~$\be_2$, and so $\R_2 = \bigcup_{i=1}^{m-1} C(\bu_i,\bu_{i+1})$. 

Then, assume that $\R_2$ contains infinitely many indecomposables. First notice that $u_{i1} > u_{i+1}$ for every $i \geq 1$, by Lemma~\ref{less}. Let us prove that $\lim_{i \to \infty} u_{i1} = 0$. Suppose not, then there exists some $\eps > 0$ such that $u_{i1} > \eps$ for all $i \geq 1$. This means that the vertical strip 
$$S = \{ \ba \in \real^2_{\geq 0} : a_1 \leq \eps \}$$
does not contain any indecomposable element, and thus any nonzero point of the lattice $L$. Hence, if $\alpha x_1 + \beta y_1 \geq 0$ for integers $\alpha,\beta$, not both zero, then
$$\alpha x_1 + \beta y_1 > \eps.$$
In particular, $\L_{x_1,y_1}(\alpha,\beta) := \alpha x_1 + \beta y_1 \neq 0$, so $x_1,y_1$ must be $\que$-linearly independent. But then $\L_{x_1,y_1}(\zed^2) \cap [0,1)$ must be dense in $[0,1)$ by Kronecker's approximation theorem (see, e.g., \cite{cass:dioph}). This contradicts the assumption that $S \cap L = \{ \bo \}$, and so $\lim_{i \to \infty} u_{i1} = 0$. Additionally, $\lim_{i \to \infty} u_{i2} = \infty$. Suppose not, then there exists some real number $R$ so that $u_{i2} \leq R$ for all $\bu_i \in \R_2$. This implies that $|\bu_i| \leq R$ since $u_{i1}$ tends to $0$, and hence there can be only finitely many such $\bu_i$ since $L$ is discrete.

Now, let $\bz \in \C(\R_2)$. There must exist some two consecutive indecomposables $\bu_i, \bu_{i+1}$ such that 
$$\frac{u_{i2}}{u_{i1}} \leq \frac{z_2}{z_1} \leq \frac{u_{(i+1)2}}{u_{(i+1)1}},$$
i.e., the slope of the line through the origin and $\bz$ is between the slopes of the lines through the origin and $\bu_i,\bu_{i+1}$, respectively. Therefore, $\bz \in C(\bu_i,\bu_{i+1})$, and thus $\C(\R_2)$ is covered by the union of consecutive indecomposable cones.
\endproof

\begin{rem} \label{cover} The same argument as in the proof of Lemma~\ref{cone_cover} applies to the region $\R_1$. Hence, $\real^2_{\geq 0} = \C(\R_1) \cup C(\bx,\bwy) \cup \C(R_2)$ is covered by the union of all the consecutive indecomposable cones.
\end{rem}

We are now ready for the main result of this section.

\proof[Proof of Theorem~\ref{ind_sum}]
Since $\real^2_{\geq 0}$ is covered by the union of consecutive indecomposable cones (Remark~\ref{cover}), there must exist some consecutive indecomposables $\bu',\bv'$ such that $\bz \in C(\bu',\bv')$. First, assume that $\bz$ lies on the boundary of this cone, then it is a multiple of either $\bu'$ or $\bv'$, say, $\bz = a \bu'$. Then $a \geq 2$ is an integer since $\bz$ is decomposable. In this case, choose $\bu = \bv = \bu'$ and take $\alpha = a-1 > 0$, $\beta = 1$, so
$$\bz = \alpha \bu + \beta \bv.$$
Next, suppose that $\bz$ is in the interior of this cone. Then take $\bu = \bu'$, $\bv = \bv'$. Since $\bu,\bv$ form a basis for $L$ (Lemma~\ref{ind_basis}), there must exist positive integers $\alpha,\beta$ such that $\bz = \alpha \bu + \beta \bv$, by Lemma~\ref{cone_sem}. In either of these two cases, $\bz, \bu, \bv > \bo$ and $\alpha,\beta > 0$, hence we have $|\bu| + |\bv| = |\bu+\bv| \leq |\bz|$.
\endproof

\begin{rem} \label{hk} After producing our proof of the above theorem, we became aware of the nice work of Hejda and Kala~\cite{hejda_kala}. Theorem~2 of that paper provides a similar result to our Theorem~\ref{ind_sum} for indecomposables in the ring of integers of a real quadratic number field, whereas our theorem applies to any planar lattice. Further, a pair of consecutive indecomposables in a lattice $L$ forms a {\it minimal system} in the sense of Voronoi (see~\cite{ustinov} for details); hence, our Theorem~\ref{ind_sum} provides a slightly different perspective on that classical subject.
\end{rem}

We finish this section by a couple brief remarks on the differences between pairs of consecutive indecomposables. Given a consecutive pair of indecomposables $\bu,\bv$, we define their difference vector $\bd(\bu,\bv) = \bv-\bu$. We first prove that all such difference vectors are distinct.

\begin{lem} \label{dist_diff} Let $\bu,\bv$ and $\bt,\bw$ be two different pairs of consecutive indecomposables in the planar lattice $L$. Then $\bd(\bu,\bv)$ and $\bd(\bt,\bw)$ are linearly independent, i.e., $\bd(\bu,\bv) \neq \alpha \bd(\bt,\bw)$ for any $\alpha \in \real$.
\end{lem}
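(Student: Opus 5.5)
The plan is to go through determinants, using Lemma~\ref{ind_basis}, which says every consecutive pair is a basis of $L$.

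\emph{Setup.} Order all indecomposables of $L^+$ by angle from $\be_1$ to $\be_2$, listing those in $\R_1$, then $\bx,\bwy$, then those in $\R_2$; by Remark~\ref{cover}, consecutive pairs are exactly neighbours in this chain. Write the chain as $\dots,\bw_{k},\bw_{k+1},\dots$ with $\bd_k=\bw_{k+1}-\bw_k$. Each consecutive pair is a basis with the same orientation, so
$$\det(\bw_k,\bw_{k+1})=\det(L)=:D \quad\text{for all } k.$$
Expanding gives
$$\det(\bw_k,\bd_k)=\det(\bw_{k+1},\bd_k)=D.$$
Hence both $\bw_k$ and $\bw_{k+1}$ lie on the affine line $\ell_k=\{\bw:\det(\bw,\bd_k)=D\}$. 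By Lemma~\ref{less}, along the chain the first coordinate strictly decreases and the second weakly increases, so every $\bd_k$ has negative first coordinate. Two such vectors can only be parallel with a positive scalar.

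\emph{Reduction to $\bd_j=\bd_i$.} Suppose $i<j$ and $\bd_j=a\bd_i$ with $a>0$. Since $\{\bw_i,\bw_{i+1}\}$ is a basis, $\bd_i$ is primitive, and $\det(\bw,\bd_i)\in D\zed$ for every $\bw\in L$. The same holds for $\bd_j$. From $D=\det(\bw_j,\bd_j)=a\det(\bw_j,\bd_i)$ and the fact that $\det(\bw_j,\bd_i)\in D\zed$, we get that $1/a$ is a positive integer. Both vectors are primitive and positively proportional, so $a=1$. Then $\det(\bw_j,\bd_i)=D$, i.e.\ $\bw_j\in\ell_i$, and likewise $\bw_{j+1}\in\ell_i$. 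Since $\ell_i\cap L=\bw_i+\zed\bd_i$, we get $\bw_j=\bw_i+m\bd_i$ with $m\geq 1$.

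\emph{Ruling out collinear indecomposables.} Next I show that every indecomposable strictly after $\bw_{i+1}$ satisfies $\det(\bw,\bd_i)>D$, i.e.\ lies strictly beyond $\ell_i$ on the far side from the origin. Granting this, we get $m\le 1$, hence $j=i$, a contradiction. To prove the claim I would run induction along the chain using the basis property. If some later $\bw_k$ lies on or before $\ell_i$, then consider the first such $k$. The lattice point $\bw_{i+1}+\bd_i$ (or a suitable point of $\bw_{i+1}+\zed\bd_i$) lies in a consecutive cone $C(\bw_{k-1},\bw_k)$. Lemma~\ref{cone_sem} then writes it as a nonnegative integer combination of $\bw_{k-1},\bw_k$, and comparing $\det(\cdot,\bd_i)$ values should force a coordinatewise inequality contradicting Lemma~\ref{less}.

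This last step is the main obstacle: it amounts to excluding three collinear consecutive indecomposables, i.e.\ a relation $2\bw_{i+1}=\bw_i+\bw_{i+2}$. What I would try first is to show that the polygonal chain of indecomposables is strictly convex, in the sense that each $\bw_k$ is a vertex of the convex hull of $L^+\setminus\{\bo\}$. For a relation $2\bw_{i+1}=\bw_i+\bw_{i+2}$, I would then try to produce a nonzero lattice point $\leq\bw_{i+1}$ from the points $\bw_i$, $\bw_{i+2}$, $\bw_{i+1}\pm\bd_i$, contradicting indecomposability via Lemma~\ref{less}.
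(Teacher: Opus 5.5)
Your first two steps are sound. Consecutive pairs are positively oriented bases, so $\det(\bw_k,\bd_k)=\det(\bw_{k+1},\bd_k)=D$, and every $\bd_k$ has negative first and positive second coordinate. Your primitivity argument also correctly shows that $\bd_j=a\bd_i$ forces $a=1$ and $\bw_j,\bw_{j+1}\in\ell_i$.

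The gap is the final step, and it cannot be closed: three collinear consecutive indecomposables do occur, so the lemma as stated is false. Take $L=\{(a,b)\in\zed^2 : a\equiv b \pmod{2}\}$. Every other point of $L^+$ dominates one of $(1,1)$, $(2,0)$, $(0,2)$: if $a,b\geq 1$ it dominates $(1,1)$, and a nonzero point on an axis dominates $(2,0)$ or $(0,2)$. So by Lemma~\ref{less} the indecomposables are exactly these three points. The consecutive pairs are $\{(2,0),(1,1)\}$ and $\{(1,1),(0,2)\}$, and both have difference vector $(-1,1)$.

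This is not a PVR artifact. For $K=\que(\sqrt2)$ and $L=\Sigma_K(\O_K)$, the indecomposables $1$, $2+\sqrt2$, $3+2\sqrt2$ are consecutive: each adjacent pair has determinant of absolute value $2\sqrt2=\det(L)$, so Lemma~\ref{cone_sem} leaves no other indecomposable in its cone. Since $1+(3+2\sqrt2)=2(2+\sqrt2)$, both differences equal $\Sigma_K(1+\sqrt2)$. So the strict convexity you hoped to establish fails: a lattice point in the relative interior of an edge of the Klein polygon can be indecomposable. In the quadratic case these are the semiconvergents, e.g.\ $2+\sqrt2$ is the midpoint of the edge from $1$ to $3+2\sqrt2$.

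The paper's one-line proof rests on exactly this false assertion, that indecomposables are vertices of the Klein polygon, so it cannot rescue the argument. What your reduction does prove is the correct replacement. If $\bd(\bu,\bv)$ and $\bd(\bt,\bw)$ are linearly dependent, then they are equal, and $\bu,\bv,\bt,\bw$ all lie on one line of negative slope, i.e.\ on a single edge of the Klein polygon. Such a line meets $\real^2_{\geq 0}$ in a bounded segment, so each difference vector occurs for only finitely many consecutive pairs. That weaker fact is all the discreteness argument in Lemma~\ref{diff_size} actually needs.
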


\proof
The indecomposable elements are vertices of the Klein polygon of $L$, which is the convex hull of $L^+$ (see~\cite{ustinov}). Thus, the difference vectors of consecutive pairs correspond to the edges of the Klein polygon, which implies that they have different slopes and cannot be scalar multiples of each other.
\endproof

We also briefly consider the distribution of these difference vectors. 

\begin{lem} \label{diff_size} Assume the lattice $L$ is not PVR. Let us write $|\bd(\bu_i,\bu_{i+1})|$ and $\min \bd(\bu_i,\bu_{i+1})$ for the maximum and minimum absolute value of coordinates of the difference vector $\bd(\bu_i,\bu_{i+1})$, respectively. Then
$$\limsup_{i \to \infty} |\bd(\bu_i,\bu_{i+1})| = \infty,\ \liminf_{i \to \infty} \min \bd(\bu_i,\bu_{i+1}) = 0.$$
\end{lem}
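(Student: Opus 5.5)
Since $L$ is not PVR, Theorem~\ref{inf_many} gives infinitely many indecomposables. Without loss of generality we may assume infinitely many of them lie in $\R_2$, and we work with the sequence $\bu_1,\bu_2,\dots$ used in the proof of Lemma~\ref{cone_cover}. (If only finitely many lie in $\R_2$, infinitely many lie in $\R_1$ and the argument is symmetric.) That proof established two limits, $u_{i1}\to 0$ and $u_{i2}\to\infty$. Along this sequence the first coordinates strictly decrease and the second coordinates strictly increase, by Lemma~\ref{less}. Hence the difference vector $\bd_i := \bu_{i+1}-\bu_i$ has coordinates $d_{i1} = u_{(i+1)1}-u_{i1} <0$ and $d_{i2} = u_{(i+1)2}-u_{i2} > 0$.

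\textbf{First statement.} The differences telescope, so $\sum_{i<N} d_{i2} = u_{N2}-u_{12} \to \infty$. This alone does not force $\limsup |\bd_i| = \infty$, because a sum of bounded terms can diverge. I would use Lemma~\ref{ind_basis} instead: each consecutive pair $\bu_i,\bu_{i+1}$ is a basis of $L$, so $|\det(\bu_i,\bu_{i+1})| = \det(L)$. Writing $\det(\bu_i,\bu_{i+1}) = \det(\bu_i,\bd_i)$, suppose for contradiction that $|\bd_i| \leq B$ for all $i$. Then the $\bd_i$ are lattice vectors in a bounded box, so only finitely many distinct vectors occur. Lemma~\ref{dist_diff} says the $\bd_i$ are pairwise linearly independent, hence pairwise distinct. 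Infinitely many indices would then give infinitely many distinct lattice vectors of sup-norm at most $B$, contradicting discreteness. Therefore $\sup_i|\bd_i|=\infty$, and since every tail of the sequence is also infinite, $\limsup_i|\bd_i| = \infty$.

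\textbf{Second statement.} We have $\min \bd_i \leq |d_{i1}| = u_{i1}-u_{(i+1)1} \leq u_{i1}$. Since $u_{i1}\to 0$, it follows that $\min\bd_i \to 0$, and in particular $\liminf_i \min \bd_i = 0$. One could also note that $\sum_i |d_{i1}| = u_{11} < \infty$, so $|d_{i1}|\to 0$ directly.

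\textbf{Main obstacle.} The delicate point is the reduction to $\R_2$ together with confirming that the indexing matches the one in Lemma~\ref{cone_cover}. If both regions contain infinitely many indecomposables, the claim should be read along either infinite chain, and the argument above applies to each. Beyond that, Lemma~\ref{dist_diff} carries the first part, and the limit $u_{i1}\to0$ from Lemma~\ref{cone_cover} carries the second.
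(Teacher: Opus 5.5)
Your proposal follows the paper's proof. The $\liminf$ part uses the first coordinates of the chain in $\R_2$; your bound $\min\bd_i \leq |d_{i1}|$ together with $\sum_i |d_{i1}| \leq u_{11}$ is sound and even gives $\lim_i \min\bd_i = 0$. The $\limsup$ part uses Lemma~\ref{dist_diff} plus discreteness, and that step is a genuine gap, one the paper's proof shares. Lemma~\ref{dist_diff} is false, so you cannot use ``the $\bd_i$ are pairwise linearly independent, hence pairwise distinct''. Its proof assumes indecomposables are vertices of the Klein polygon. In fact an indecomposable can lie in the relative interior of an edge, and then consecutive pairs along that edge have the \emph{same} difference vector.

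For example, take $K=\que(\sqrt6)$, $L=\Sigma_K(\O_K)$, $\O_K=\zed[\sqrt6]$. The elements $1$, $3+\sqrt6$, $5+2\sqrt6$ are indecomposable: totally positive units always are, and $3+\sqrt6$ is checked directly with Lemma~\ref{less}. Each adjacent pair has determinant $1$ with respect to the basis $1,\sqrt6$, so it is a basis of $L$ and is consecutive by Lemma~\ref{cone_sem}. Yet both differences equal $\Sigma_K(2+\sqrt6)$; indeed $\Sigma_K(3+\sqrt6)$ is the midpoint of $\Sigma_K(1)$ and $\Sigma_K(5+2\sqrt6)$. Multiplying by powers of the totally positive unit $5+2\sqrt6$, a positive diagonal map preserving $L^+$ and consecutiveness, makes such repetitions occur infinitely often along the chains. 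So the $\bd_i$ are not pairwise distinct.

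The statement itself survives, and the repair uses only what your second part already gives. Since $d_{i1}<0$ for every $i$ and $\sum_i |d_{i1}| \leq u_{11}$, we have $|d_{i1}|\to 0$. Suppose $|\bd_i|\leq B$ for all $i\geq i_0$. Then every $\bd_i$ with $i\geq i_0$ lies in the finite set $L\cap[-B,B]^2$. The positive numbers $|d_{i1}|$, $i\geq i_0$, therefore take only finitely many values and are bounded below by some $c>0$, contradicting $|d_{i1}|\to 0$. Hence $\limsup_i |\bd_i| = \infty$ with no appeal to Lemma~\ref{dist_diff}. The determinant identity you set up is never used and can be dropped.
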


\proof
Since $L$ is not PVR, at least one of the regions $\R_1$ and $\R_2$ must contain infinitely many indecomposables; without loss of generality, assume it is $\R_2$. Let us write $d_{i1},d_{i2}$ for the coordinates of the difference vector $\bd(\bu_i,\bu_{i+1})$, then $|d_{i1}| = \min \bd(\bu_i,\bu_{i+1})$ and $d_{i2} = |\bd(\bu_i,\bu_{i+1})|$, since $\bu_i,\bu_{i+1} \in \R_2$. In particular, $y_1 > |d_{i1}|$ for every $i \geq 1$. Suppose that $\liminf_{i \to \infty} \min \bd(\bu_i,\bu_{i+1}) \neq 0$, then there exists some $\eps > 0$ such that $|d_{i1}| > \eps$ for all $i \geq 1$. This means that $u_{i1} < y_1 - \eps (i-1) < 0$ for sufficiently large $i$. This contradicts $\bu_i$ being in $L^+$.

On the other hand, suppose that $\limsup_{i \to \infty} |\bd(\bu_i,\bu_{i+1})| \neq \infty$, i.e., there exists some real number $T > 0$ such that $d_{i2} \leq T$. Since all of the difference vectors are distinct by Lemma~\ref{dist_diff}, this implies that there are infinitely vectors of $L$ satisfying $|\bd(\bu_i,\bu_{i+1})| \leq T$. This contradicts discreteness of $L$.
\endproof

\bigskip

\section{Counting indecomposables from real quadratic number fields}
\label{counting}

The convex hull of indecomposables in $L^+$ for a planar lattice $L$ forms a Klein polygon approximating the coordinate axes as close as possible by positive points of the given lattice. This suggests that the number of indecomposables of bounded sup-norm in $L$ grows logarithmically as this bound tends to infinity. We aim to produce a precise version of this observation for lattices coming from the rings of integers of real quadratic fields.

The main goal of this section is to prove Theorem~\ref{main_cnt}. Let $K$ be a real quadratic field with ring of integers $\O_K$ and discriminant $\Delta_K$. Let $J \subseteq K$ be a fractional ideal, then there exists an ideal $I \subset \O_K$ and an element $c \in K$ such that $J = cI$. Let $L = \Sigma_K(J)$ and observe that for any $\bo \neq \bz = (z_1,z_2) \in L$, $z_1,z_2 \neq 0$. Hence, $L$ contains infinitely many indecomposables in both regions $\R_1$ and $\R_2$, defined as in~\eqref{R1R2}. The norm of $J$ is then given by
$$\Nn_K(J) = |c| \Nn_K(I) = |c| |\O_K:I|.$$
and the determinant of the lattice $L$ is
\begin{equation}
\label{norm_det}
\det(L) = \Nn_K(J) \sqrt{|\Delta_K|}.
\end{equation}
The following lemma is an adaptation of Theorem~5 of~\cite{kala_yatsyna} along with its proof.

\begin{lem} \label{log_curve} Let $L = \Sigma_K(J)$ be as above. Let $\bu = (u_1,u_2) \in L^+$ be an indecomposable element. Then
$$u_1 u_2 \leq (\det(L))^2.$$
\end{lem}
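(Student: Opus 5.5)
We argue by contradiction with Minkowski's Convex Body Theorem, following Kala and Yatsyna. Suppose $\bu = (u_1,u_2) \in L^+$ is indecomposable but $u_1 u_2 > (\det(L))^2$. Consider the symmetric box
$$B = \left\{ \bz \in \real^2 : |z_1 - \tfrac{u_1}{2}\cdot 0| \le \ldots \right\}$$
in a cleaner form: we look for a lattice point $\bz \in L$ with $|z_1| \le u_1/2$ and $|z_2| \le u_2/2$ (roughly). The natural convex symmetric body is $B = [-u_1/2,u_1/2] \times [-u_2/2,u_2/2]$, with area $u_1 u_2$. Minkowski's theorem in the plane guarantees a nonzero point of $L$ in $B$ as soon as the area is $\ge 4\det(L)$; this, however, only yields the weaker condition $u_1u_2 \ge 4\det(L)$. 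We therefore exploit the multiplicative structure: $L = \Sigma_K(J)$ is closed under multiplication by $\Sigma_K(\O_K)$, and the product of coordinates of $\Sigma_K(\alpha)$ is $\sigma_1(\alpha)\sigma_2(\alpha) = \pm \Nn_K(\alpha)$, which for $\alpha \in J$ is a multiple of $\Nn_K(J)$ in absolute value.

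The key steps, in order, are as follows. First, note that for every nonzero $\bz \in L$ we have $|z_1 z_2| = |\Nn(\alpha)| \ge \Nn_K(J)$, since $\alpha J^{-1}$ is an integral ideal. Second, apply Minkowski to the box $B$ of area $u_1u_2$. If $u_1u_2 \ge 4\det(L) = 4\Nn_K(J)\sqrt{\Delta_K}$, this gives a nonzero $\bz \in L$ with $|z_i| \le u_i/2$; replacing $\bz$ by $-\bz$ if needed, we may assume $z_1 \ge 0$. If also $z_2 \ge 0$, then $\bo \neq \bz \le \bu$, which contradicts Lemma~\ref{less}. If instead $z_2 < 0$, consider $\bu + \bz$ and $\bu - \bz$: both lie in $L^+$, since $|z_i| \le u_i/2$, and they sum to $2\bu$. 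This alone is not a decomposition of $\bu$. We can still use the fact that $\bu - \bz$ is positive with first coordinate at most $u_1$, together with the mixed-sign case $\bu + \bz = (u_1+z_1, u_2+z_2)$, whose second coordinate is below $u_2$ but whose first coordinate exceeds $u_1$. So a finer choice of body is needed. We will instead use the non-symmetric box $[0,u_1]\times[0,u_2]$, that is, the translate trick: the body $[-u_1,u_1]\times[-u_2,u_2]$ has area $4u_1u_2$. By Minkowski it contains a nonzero $\bz \in L$ once $u_1u_2 \ge \det(L)$. Up to sign, either $\bz \ge \bo$, giving $\bz \le \bu$ and a contradiction via Lemma~\ref{less}, or $\bz$ has mixed signs.

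The mixed-sign case is the main obstacle. There we have $z_1 > 0 > z_2$ with $|z_1| \le u_1$ and $|z_2| \le u_2$. We first try to show that $\bu+\bz$ or $\bu - \bz$ lies in a position allowing iteration, using the norm bound $|z_1z_2| \ge \Nn_K(J)$ together with a box adapted to $\bu$ scaled by the Galois conjugate of $\bu$. Concretely, we multiply: if $\bu = \Sigma_K(\gamma)$, we apply Minkowski to the lattice $\Sigma_K(\gamma' J)$, or equivalently we compare $\bu$ with the element $\Sigma_K(\gamma)\cdot \Sigma_K(\delta)$ for a suitable $\delta\in\O_K$ found by Minkowski in a box of area $\sim \det(\O_K)$. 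The aim is to produce $\delta \in \O_K$ with $0 < \sigma_i(\delta) < 1$ for both $i$ when $\Nn(\gamma)$ is large, since then $\gamma\delta \in J^+$ and $\gamma\delta < \gamma$ componentwise, contradicting Lemma~\ref{less}. This is exactly where a bound of the form $\Nn(\gamma) \le (\det L)^2$, that is, $\Nn_K(J)^2\Delta_K$, should emerge. The Minkowski box for $\O_K$ is applied to $\gamma J^{-1}$ with a box of sides related to $u_1, u_2$, and the resulting element is forced to be totally positive after possibly adding a suitable multiple. Getting the positivity (rather than a mixed-sign element) is the delicate point, and we expect to handle it as Kala--Yatsyna do: choose the box as $(0,u_1)\times(0,u_2)$ shifted, and argue via the parallelogram/basis structure (Lemma~\ref{ind_basis}) that a totally positive point must occur.
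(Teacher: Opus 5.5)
Your proposal identifies the obstacle but never gets past it, so as written it does not prove the lemma.

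\textbf{The mixed-sign case is not a technicality.} If the Minkowski point in your first box were always positive, you would have proved $u_1u_2<4\det(L)$, and that is false. In $\que(\sqrt{82})$ with $J=\O_K$, the element $82+9\sqrt{82}$ is indecomposable of norm $82>4\sqrt{328}=4\det(L)$. Your second box $[-u_1,u_1]\times[-u_2,u_2]$ fails for a different reason: it contains $\pm\bu$ itself, so Minkowski's theorem tells you nothing there.

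\textbf{The proposed repair cannot work.} There is no nonzero $\delta\in\O_K$ with $0<\sigma_i(\delta)<1$ for $i=1,2$, because $\Nn_K(\delta)$ is a nonzero rational integer. If you instead let $\delta$ range over $\gamma^{-1}J$, finding such a $\delta$ is just a restatement of the lemma.

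\textbf{The geometric fallback cannot work either.} Appealing to a shifted box $(0,u_1)\times(0,u_2)$ and Lemma~\ref{ind_basis} fails for two reasons. Minkowski's theorem needs a body symmetric about $\bo$. More fundamentally, the inequality is not a fact about planar lattices: replacing $L$ by $tL$ multiplies $u_1u_2$ by $t^2$ but $(\det L)^2$ by $t^4$. So the multiplicative structure of $J$ has to be used to produce positivity.

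\textbf{The missing idea is squaring}, as in Kala--Yatsyna and in the paper's proof. Suppose $u_1u_2>(\det L)^2$, and choose $\eps>0$ with $(\sqrt{u_1}-\eps)(\sqrt{u_2}-\eps)>\det(L)$. The symmetric box $|x_i|\le\sqrt{u_i}-\eps$ has area $>4\det(L)$, so it contains $\Sigma_K(\beta)$ for some $0\neq\beta\in J$. Then $\beta^2$ is automatically totally positive with $0<\sigma_i(\beta^2)<u_i$. Hence $\Sigma_K(\beta^2)$ lies strictly below $\bu$, and Lemma~\ref{less} gives the contradiction. This is exactly where the square $(\det L)^2$ comes from: Minkowski is applied at scale $\sqrt{u_i}$, not $u_i$, and no norm estimates are needed.

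One caveat: $\beta^2\in J$ requires $J\subseteq\O_K$, since then $\beta^2\in\O_K J\subseteq J$. For a non-integral fractional ideal the statement can actually fail. For example, take $J=\frac1N\O_K$ and $\bu=\Sigma_K(1/N)$ with $N^2>\Delta_K$. So you should assume $J$ is integral.
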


\proof
Suppose that $u_1 u_2 > (\det(L))^2$, then there exists $\eps > 0$ small enough so that
$$(\sqrt{u_1} - \eps)(\sqrt{u_2} - \eps) > \det(L).$$
Define a box
$$B_{\eps} = \left\{ \bx \in \real^2 : |x_i| \leq \sqrt{u_i} - \eps, i = 1,2 \right\},$$
then $\Vol_2(B_{\eps}) = 2^2 (\sqrt{u_1} - \eps)(\sqrt{u_2} - \eps) > 2^2 \det(L)$. Hence, Minkowski Convex Body Theorem (see, e.g.,~\cite{grub:lek}) guarantees that there exists a nonzero point $\bwy \in B_{\eps} \cap L$. This means that $\bwy = (\sigma_1(\beta),\sigma_2(\beta))$ for some $0 \neq \beta \in J$, so $\beta^2 \in J$. Then 
$$\bz = (z_1,z_2) = (\sigma_1(\beta)^2, \sigma_2(\beta)^2) = \Sigma_K(\beta^2) \in L^+,$$ 
and $z_1 < u_1$, $z_2 < u_2$. Then Lemma~\ref{less} implies that $\bu$ is not indecomposable. This completes the proof.
\endproof

Define the region
$$R_L = \left\{ \bx \in \real^2_{\geq 0} : x_1x_2 \leq (\det(L))^2 \right\},$$
then, by Lemma~\ref{log_curve}, all the indecomposable elements of $L$ lie in $R_L$. For any $T \geq 0$, define $R_L(T) = \{ \bx \in R_L : |\bx| \leq T \}$. Then 
\begin{equation}
\label{arlt}
\Ar(R_L(T)) = (\det(L))^2 + \int_{1/T}^T \frac{(\det(L))^2}{x} dx = (\det(L))^2 (2 \log T + 1).
\end{equation}
Suppose that $R_L(T)$ contains $n$ indecomposables $\bu_1,\dots,\bu_n$, written in the order of decreasing first coordinate. Then for each $1 \leq i \leq n-1$, $\bu_i,\bu_{i+1}$ is a consecutive pair of indecomposables, hence, a basis for $L$. Let $\bu_i', \bu_{i+1}'$ be the intersection points of the lines $t\bu_i, t\bu_{i+1}$, respectively, with the curve $y= \frac{(\det(L))^2}{x}$. Let $S_i, S_i'$ be the triangles, which are the convex hulls of $\bo,\bu_i,\bu_{i+1}$ and $\bu_i',\bu_{i+1}'$, respectively, so $S_i \subseteq S_i'$. Then
\begin{equation}
\label{si_ar}
\Ar(S_i') \geq \Ar(S_i) = \frac{\det(L)}{2},
\end{equation}
since consecutive indecomposables $\bu_i,\bu_{i+1}$ form a basis for $L$ (Lemma~\ref{ind_basis}), and hence, the parallelogram spanned by $\bu_i,\bu_{i+1}$ is a fundamental domain for $L$. Write also $C_i$ for the intersection of the cone $C(\bu_i',\bu_{i+1}')$ with $R_L(T)$, then $C_i \subset S_i'$. In our next lemma, we estimate the area of the region $S_i \setminus C_i$, an example of which is colored yellow in Figure~\ref{log_ind_fig}.

\begin{lem} \label{area_error} Let us write $D$ for $\det(L)$. Then
$$\Ar(S_i \setminus C_i) \leq \frac{D}{2} - D^2 \log \left( \frac{1+\sqrt{4D^2+1}}{2D} \right).$$
\end{lem}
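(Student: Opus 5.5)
The plan is to reduce to an extremal configuration in which both endpoints lie on the hyperbola $x_1x_2 = D^2$, and then to compare a triangle area with a hyperbolic sector area. Two preliminary observations frame this.

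First, by Lemma~\ref{log_curve}, both $\bu_i$ and $\bu_{i+1}$ lie in $R_L$, i.e. on or below the hyperbola $\H: x_1x_2 = D^2$. Second, $S_i$ lies in the cone $C(\bu_i,\bu_{i+1}) = C(\bu_i',\bu_{i+1}')$. Hence a point of $S_i$ fails to be in $C_i$ only if it lies strictly above $\H$. (The constraint $|\bx|\le T$ is harmless here, since $S_i$ lies in the box of side $\max(|\bu_i|,|\bu_{i+1}|) \le T$.)

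Let $\ell$ be the line through $\bu_i,\bu_{i+1}$. The region $\{x_1x_2 \ge D^2\}$ in the open quadrant is convex. Its intersection with $\ell$ is therefore a (possibly empty) segment $[\bp,\bq]$.
\begin{itemize}
\item If this segment is empty or degenerate, $S_i\setminus C_i$ has zero area and the bound is trivial. This uses $D^2\log r \le D/2$ at the extremal $r$ defined below, which will follow from the same monotonicity computation since $f(1)=0$.
\item Otherwise, since both endpoints $\bu_i,\bu_{i+1}$ lie on or below $\H$, the segment $[\bp,\bq]$ is contained in $[\bu_i,\bu_{i+1}]$. Then $S_i\setminus C_i$ is exactly the region between the chord $[\bp,\bq]$ and the arc of $\H$ from $\bp$ to $\bq$.
\end{itemize}
Consequently
$$\Ar(S_i\setminus C_i) = \Ar(\mathrm{conv}(\bo,\bp,\bq)) - \Ar(\text{hyperbolic sector } \bo\bp\bq).$$

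Next I would compute both areas in hyperbolic coordinates. Write $\bp=(Da,D/a)$ and $\bq=(Db,D/b)$ with $r=a/b>1$, and set $\det(\bp,\bq)=D^2(r-1/r)$. The triangle then has area $\tfrac{D^2}{2}(r-1/r)$. The sector between rays of slopes $1/a^2$ and $1/b^2$ has area $\tfrac{D^2}{2}\log(r^2) = D^2\log r$; this is the standard hyperbolic-sector formula, obtained by the substitution $x_1 = D e^{t}$, $x_2 = D e^{-t}$. So the error area equals
$$f(r) = \tfrac{D^2}{2}\left(r-\tfrac1r\right) - D^2\log r.$$
Since $\bp,\bq$ lie on the segment $[\bu_i,\bu_{i+1}]$, and $\det$ of two points on a fixed line is proportional to their distance along it, we get
$$D^2(r-1/r) = \det(\bp,\bq) \le \det(\bu_i,\bu_{i+1}) = D.$$
The final equality holds because consecutive indecomposables form a basis by Lemma~\ref{ind_basis}, which is also where $\Ar(S_i)=D/2$ came from.

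Finally, $f'(r) = \tfrac{D^2}{2}(1-1/r)^2 \ge 0$, so $f$ is increasing. Its maximum under the constraint is therefore attained when $r-1/r = 1/D$, i.e. $r = \frac{1+\sqrt{4D^2+1}}{2D}$. At that point $f(r) = \frac D2 - D^2\log\left(\frac{1+\sqrt{4D^2+1}}{2D}\right)$, which is the claimed bound.

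The main obstacle is the geometric reduction in the second step. One must carefully justify that $S_i\setminus C_i$ is exactly the chord–arc region cut off by $[\bp,\bq]$, using convexity of $\{x_1x_2\ge D^2\}$ and the fact that both endpoints lie below $\H$. One must also justify that shrinking the chord to $[\bp,\bq]$ only decreases $\det$. The calculus afterwards is routine.
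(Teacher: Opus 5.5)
Your argument is correct, but it reaches the extremal case by a different route than the paper. The computational core is the same as the paper's. The area between the chord and the hyperbola equals $D^2\bigl(\tfrac{r-1/r}{2}-\log r\bigr)$; the paper integrates directly, while you subtract a hyperbolic sector from a triangle. At the extremal configuration the determinant condition gives $r-1/r=1/D$.

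The difference is in the reduction to that configuration. The paper simply asserts that $\Ar(S_i\setminus C_i)$ is largest when $\bu_i,\bu_{i+1}$ lie on the hyperbola, with determinant still $D$. This does not follow from scaling $\bu_i,\bu_{i+1}$ out to $\bu_i',\bu_{i+1}'$, because that multiplies the determinant by a factor at least $1$. You instead pass to the sub-chord $[\boldsymbol{p},\boldsymbol{q}]$ above $\H$. You then combine $\det(\boldsymbol{p},\boldsymbol{q})\le\det(\bu_i,\bu_{i+1})=D$ with $f'(r)=\tfrac{D^2}{2}(1-1/r)^2\ge 0$. This supplies exactly the justification the paper omits. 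You also treat explicitly the truncation $|\bx|\le T$ and the degenerate case, where the extremal value of $f$ is at least $f(1)=0$; the paper leaves both implicit.

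One small repair is needed in your case split: intersect $\{x_1x_2\ge D^2\}$ with the segment $[\bu_i,\bu_{i+1}]$ rather than with the line $\ell$. As written, the step ``both endpoints lie on or below $\H$, so $[\boldsymbol{p},\boldsymbol{q}]\subseteq[\bu_i,\bu_{i+1}]$'' fails when $\bu_i$ and $\bu_{i+1}$ lie on the same side of $[\boldsymbol{p},\boldsymbol{q}]$ along $\ell$. In that situation the area is zero anyway. Since $x_1x_2$ increases along rays from $\bo$, every point of $S_i$ above $\H$ lies on a ray through a point of $[\bu_i,\bu_{i+1}]$ above $\H$. So set $[\boldsymbol{p},\boldsymbol{q}]:=[\bu_i,\bu_{i+1}]\cap\{x_1x_2\ge D^2\}$. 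When this intersection is nondegenerate, both its endpoints lie on $\H$, and the rest of your argument goes through unchanged.
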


\proof
First notice that $A_i := \Ar(S_i \setminus C_i)$ is maximized when $S_i = S_i'$, i.e., when the indecomposables $\bu_i,\bu_{i+1}$ lie on the hyperbola $x_2=D^2/x_1$. From here on, assume this is the case. Let $\bu_i = (u_{i1},u_{i2})$, $\bu_{i+1} = (u_{(i+1)1},u_{(i+1)2})$, then
\begin{equation}
\label{ae_1}
u_{i1} u_{i2} = u_{(i+1)1} u_{(i+1)2} = D^2,
\end{equation}
and
\begin{equation}
\label{ae_2}
D = \left| \det \begin{pmatrix} u_{i1} & u_{(i+1)1} \\ u_{i2} & u_{(i+1)2} \end{pmatrix} \right| = \left| u_{i1} u_{(i+1)2} - u_{(i+1)1} u_{i2} \right|.
\end{equation}
Notice that $A_i$ is the area of the region between the hyperbola $x_2=D^2/x_1$ and the cord connecting the points $\bu_i, \bu_{i+1}$. Setting up the integral and using condition~\eqref{ae_1}, we find that
\begin{equation}
\label{Ai}
A_i = D^2 \left( \frac{u_{i1}^2 - u_{(i+1)1}^2}{2u_{i1} u_{(i+1)1}} - \log \left( \frac{u_{i1}}{u_{(i+1)1}} \right) \right).
\end{equation}
Let us write $R = u_{i1}/u_{(i+1)1}$, then $R > 1$. Substituting $u_{i2} = D^2/u_{i1}$ and $u_{(i+1)2} = D^2/u_{(i+1)1}$ from~\eqref{ae_1} into~\eqref{ae_2}, we obtain
$$R - 1/R = 1/D,$$
since $R > 1/R$. Solving this equation for $R$, we obtain $R = \frac{1+\sqrt{4D^2+1}}{2D}$. Now, substitute it in for $u_{i1}/u_{(i+1)1}$ into~\eqref{Ai} to obtain
$$A_i = D^2 \left( \frac{R}{2} - \frac{1}{2R} - \log R \right) = D^2 \left( \frac{1}{2D} - \log \left( \frac{1+\sqrt{4D^2+1}}{2D} \right) \right).$$
\endproof

\proof[Proof of Theorem~\ref{main_cnt}]

Notice that
\begin{equation}
\label{area_sum}
\sum_{i=1}^{n-1} \Ar(C_i) \leq \Ar(R_L(T)),
\end{equation}
where
$$\Ar(C_i) = \Ar(S_i) - \Ar(S_i \setminus C_i) \geq D^2 \log \left( \frac{1+\sqrt{4D^2+1}}{2D} \right),$$
by combining~\eqref{si_ar} with Lemma~\ref{area_error}. Now, combining this last observation with~\eqref{area_sum} and~\eqref{arlt}, we obtain
$$n \leq \left( \log \left( \frac{1+\sqrt{4D^2+1}}{2D} \right) \right)^{-1} (2 \log T + 1) + 1.$$
\endproof

\begin{figure}
\centering
\includegraphics[scale=0.4]{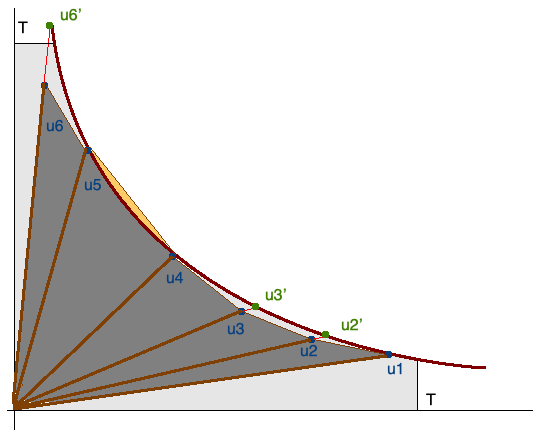}
\caption{Region $R_L(T)$ with indecomposables in it.}\label{log_ind_fig}
\end{figure}

\bigskip

\noindent
{\bf Acknowledgement:} We thank Vitezslav Kala and Mikulas Zindulka for some helpful comments which improved our paper.
\bigskip

\bibliographystyle{plain}  

\end{document}